\documentclass[11pt,reqno]{article}
\usepackage{amssymb, amsmath, amsthm, amsfonts, amscd, epsfig, subfig, gensymb}
\usepackage[dvipsnames]{xcolor}
\usepackage[utf8]{inputenc}
\usepackage[english]{babel}
\usepackage{bm}
\usepackage{bbm}
\usepackage{ulem}
\usepackage{graphicx}
\usepackage{geometry,graphicx,pgfplots, multirow}

\usepackage[export]{adjustbox}
\usepackage[titletoc,toc,title]{appendix}
\usepackage{algorithm}
\usepackage{mathtools}

\usepackage{afterpage}
\usepackage{comment}

\newtheorem{theorem}{Theorem}[section]

\newtheorem{lemma}[theorem]{Lemma}

\newtheorem{remark}{Remark}

\def\Z{\mathbb{Z}}

\def\R{\mathbb{R}}

\newcommand{\Om}{\Omega}

\newcommand{\RR}{\mathbb{R}}

\newcommand{\eps}{{\varepsilon}}

\newcommand{\p}{\partial}

\newcommand{\eqnref}[1]{(\ref {#1})}

\newcommand{\beq}{\begin{equation}}
\newcommand{\eeq}{\end{equation}}
\newcommand{\RN}[1]{%
  \textup{\uppercase\expandafter{\romannumeral#1}}%
}

\numberwithin{equation}{section}
\numberwithin{figure}{section}

\begin{document}
\title{
Bayesian optimization approach for tracking a moving target from far-field data in three dimensions
}

\date{}

\author{
Woojoo Lee\thanks{Department of Mathematical Sciences, Korea Advanced Institute of Science and Technology, 291 Daehak-ro, Yuseong-gu, Daejeon 34141, Republic of Korea (woojoo.lee@kaist.ac.kr, mklim@kaist.ac.kr).}\and
Sangwoo Kang\thanks{Graduate School of Data Science, Pusan National University, 2, Busandaehak-ro 63beon-gil, Geumjeong-gu, Busan 46241, Republic of Korea (sangwoo.kang@pusan.ac.kr)}
\and Mikyoung Lim\footnotemark[1]
}

\maketitle

\begin{abstract}
We investigate a three-dimensional inverse scattering problem for tracking a rigidly moving target from far-field data generated by a single incident field. Extending our recent two-dimensional study, we develop a Bayesian optimization framework for simultaneously tracking the target's location and orientation over successive time steps, with the translational and rotational motions modeled as independent stochastic processes. We derive analytical formulas for the far-field pattern under translations and rotations and use them to design a Bayesian optimization procedure tailored to the tracking problem. We further establish posterior consistency for the underlying probabilistic model. When the target shape is unknown, its shape is identified at the initial time using a fully connected neural network trained on a precomputed dataset. Numerical experiments validate the effectiveness of the proposed framework.
\end{abstract}


\noindent {\footnotesize {\bf Keywords.} 
Inverse scattering problems, Tracking, Far-field data, Bayesian optimization, Shape reconstruction, Machine learning}


\section{Introduction}
Inverse scattering problems aim to identify the geometric and material features of targets and have been extensively studied across a wide range of applications, including biomedical imaging \cite{Abubakar:2002:IBD}, nondestructive testing \cite{Marklein:2006:ISI, Salucci:2016:RTN}, and remote sensing \cite{Woodhouse:2017:IMR}. The primary challenge lies in tackling the inherent nonlinearity and ill-posedness of inverse problems. Numerous approaches have been developed and can be classified into iterative, decomposition, and sampling methods, as discussed in the survey \cite{Luke:2003:NRT}. Extensive investigations have also been conducted on piecewise homogeneous media \cite{Liu:2010:DIO} as well as fully inhomogeneous media \cite{Colton:1998:IAE,Kirsch:1998:ROI,Li:2013:DSM}. Furthermore, recent research has focused on accommodating restrictive measurement configurations, such as limited-view, monostatic, and bistatic measurements \cite{Huang:2022:BAI, Ning:2025:CPF, Kang:2022:MSM, Kang:2022:FIS}.

For moving targets, the primary interest shifts to estimating trajectories, a process commonly referred to as {\textit{tracking}}. 
The efficient tracking of moving objects is crucial for practical applications, including autonomous driving \cite{Leon:2021:RTT}, robotics \cite{Robin:2016:MRT}, and radar imaging \cite{Wang:2012:MRI, Haworth:2007:DTM}. To overcome the intrinsic difficulties posed by unpredictable target trajectories, various mathematical frameworks have been proposed. These tracking strategies range from algebraic methods \cite{Ohe:2011:RRT, Ohe:2020:RRM} and Bayesian inference \cite{Wang:2023:LMS} for point sources to linearized imaging theories and Kirchhoff migration \cite{Son:2024:RTT} for moving objects. Moreover, multipole expansions have been utilized in the context of conductivity problems \cite{Ammari:2013:TMT}.

\begin{figure}[H]
	\centering
	\includegraphics[width=0.4\textwidth]{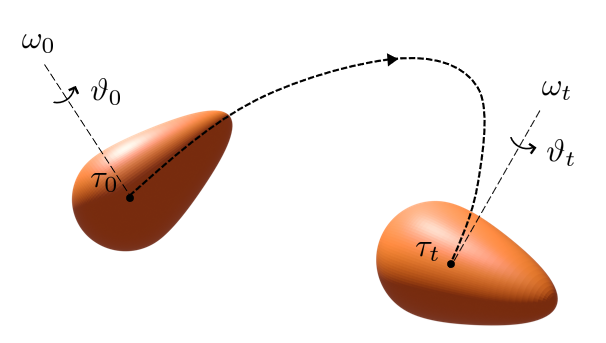}
		\caption{\label{fig:tracking} Target tracking. At each time step $t$, we reconstruct the target's location $\tau_t$ and orientation (characterized by a rotation angle $\vartheta_t$ around an axis $\omega_t$) from far-field data using a single incident field. The translational motion and rotational orientation are assumed to be independent.}
\end{figure}

In this paper, we investigate the three-dimensional problem of tracking the location and orientation of a moving target, extending our recent Bayesian optimization approach in two dimensions \cite{Lee:2025:BOA}. We consider a sound-soft scatterer undergoing rigid motion, whose shape is preserved while its location and orientation evolve over time; see Figure \ref{fig:tracking}. In particular, we allow the translational and rotational motions to evolve independently. This makes the tracking problem substantially more challenging than in settings where translation and rotation are coupled. We use far-field data generated by a single incident field under both full- and limited-aperture measurement configurations. 

Let $\Om\subset\RR^3$ denote the reference target domain, and let $\Om_t$ denote its configuration at time $t\geq 0$, with $\Om_0=\Om$. We assume that $\Omega $ is a bounded, simply connected domain with a $C^2$ boundary.
For a fixed wavenumber $k$, the total field $u=u^i+u^s$ associated with the sound-soft target $\Om$ satisfies 
\begin{align} 
\begin{cases} \label{prob:scattering}
\Delta u + k^2 u=0 & \text{in } \ \R^3\backslash \overline{\Omega},\\
u=0& \text{on } \ \p \Omega,\\
\lim_{r\to\infty} r\Big(\dfrac{\p u^s}{\p r}-iku^s\Big)=0 & \mbox{with }r=|x|.
\end{cases}
\end{align}
Here, $u^i$ is the incident plane wave
$$u^i(x)=e^{ikx\cdot d},$$ 
where $d\in S^2 := \{ y \in \mathbb{R}^3\,|\, \|y\| = 1 \}$ is the incident direction, and $u^s$ is the scattered field. 
The scattered field admits the far-field expansion
\begin{equation}\label{def:ff}
u^{s}(x) = \frac{e^{ik|x|}}{|x|}
\left[ u^{\infty}(\hat{x}) + O\!\left(\frac{1}{|x|}\right) \right]\quad\mbox{as }|x| \to \infty,
\end{equation}
uniformly with respect to the observation directions $\hat{x} = x/|x| \in S^2$; see \cite{Colton:1998:IAE}.

Our objective is to recover the location and orientation of $\Om_t$ at discrete time steps from its far-field measurements. 
Although the target undergoes continuous motion, we treat the inverse scattering problem at each time step as a static reconstruction task. This approach leverages the fact that the acquisition time for each measurement is sufficiently small compared to the time scale of the target's motion.
We assume the far-field data, $u^\infty(\hat{x})$, are acquired using a single fixed incident field under either full- or limited-aperture observation. Based on these measurements, we develop a method to track the moving target by reconstructing its complete target state (location and orientation) at each time step. 

For this inverse scattering problem, a natural approach is to minimize, at each time step, a discrepancy between the measured far-field data and the far-field data computed for a candidate target configuration. Since each evaluation of this objective function generally requires solving a forward scattering problem to compute the far-field pattern, the resulting optimization problem is computationally expensive. 

Bayesian optimization provides an efficient framework for the global optimization of black-box or computationally expensive objective function for which gradient information is unavailable or costly to obtain. It constructs a probabilistic surrogate model for the objective function and updates this model sequentially from a prior to a posterior distribution as function evaluations are collected. The surrogate model guides the selection of subsequent evaluation points through an acquisition function, which rigorously manages the trade-off between probing unexplored regions (exploration) and exploiting known promising regions (exploitation). Gaussian process priors are widely used in this setting because they yield analytically tractable closed-form expressions for the posterior distribution \cite{OHagan:1978:CFO,Mockus:1994:ABA}. Additionally, Bayesian formulation enables uncertainty quantification, which could be valuable for inverse scattering problems involving noisy data \cite{Carpio:2020:BAI, Huang:2022:BAI}. A general collective theory for Bayesian inverse problems is provided in \cite{Dashti:2017:BAI}. We refer to the related literature \cite{Huang:2021:BOF, Vargas-Hernandes:2019:BOI, Hammerschmidt:2018:SIP} for further applications of Bayesian optimization to inverse problems. 

Our previous work \cite{Lee:2025:BOA} introduced a two-dimensional Bayesian optimization framework for tracking the location and orientation of a moving target from far-field data.
The method leverages analytical properties of the far-field pattern under rigid motion to reduce the cost of evaluating a discrepancy-based objective function.
For targets of unknown shape, the reference shape is identified at the initial time using a fully connected neural network (FCNN), after which tracking proceeds without additional shape training data.

The present work develops a method for tracking a moving target in three dimensions, building on the two-dimensional framework in \cite{Lee:2025:BOA}. In contrast to the planar setting, a rigidly moving target in three dimensions has six degrees of freedom---three for translation and three for rotation---which makes the tracking problem considerably more challenging. 
Our main contributions are as follows. First, we derive three-dimensional counterparts of the translation and rotation formulas for the far-field pattern and establish its local Lipschitz dependence on the orientation near the true orientation angle. 
Second, we incorporate these analytical properties of the far-field pattern into a Bayesian optimization procedure. This significantly reduces the computational cost of evaluating the objective function. Moreover, we design analytically motivated priors and analyze the selection behavior of the acquisition function, neither of which was addressed in \cite{Lee:2025:BOA}.
Finally, we establish posterior consistency for the probabilistic model of the objective function underlying the proposed optimization procedure.

When the target shape is unknown, we employ a simple FCNN to identify the reference shape from the far-field data acquired at the initial time. The target is modeled as a perturbed ellipsoid parametrized by ellipsoidal harmonics. Its boundary is described by a finite number of shape parameters, and the FCNN is trained to learn the map from the initial far-field data to these parameters. The training dataset is used only for the initial shape-identification step. Once the reference shape has been identified, the subsequent tracking procedure relies solely on far-field data acquired from the moving target and requires no additional training data. 
It is worth noting that recent studies have introduced deep neural network architectures specialized for inverse scattering problems; see, for example, \cite{Guo:2022:PED, Chen:2020:ROD}. For a survey of data-driven methods for inverse problems, including deep neural networks, we refer to \cite{Arridge:2019:SIP}.

The proposed framework accommodates targets of general shape whose motion may be arbitrary. We test the proposed method using synthetic far-field data with additive random noise. The numerical results demonstrate that the method performs well even for an extended target under limited-aperture measurement data, provided that the shape of the target is known {\it a priori} or can be accurately identified. 
We note that sampling-type methods, despite being computationally fast and non-iterative, may be less reliable in such configurations \cite{Son:2024:RTT}; see also \cite{Kang:2022:MSM,Ning:2025:CPF}.

The paper is organized as follows. 
Section \ref{sec:ff} derives analytical properties of the far-field pattern under translations and rotations. Section \ref{sec:bo} develops a Bayesian optimization-based method for recovering the target configuration, and establishes posterior consistency for the underlying posterior model. Section \ref{sec:framework} presents the complete framework for tracking a moving target, including shape identification and the recovery of location and orientation. Numerical simulations are provided in Section \ref{sec:num}. Finally, Section \ref{sec:fin} concludes the paper.

\section{Far-field patterns under rigid motion} \label{sec:ff}
We first review the layer potential technique for the scattering problem (Section \ref{subsec:ff:lp}). We then rigorously derive general properties (Section \ref{subsec:ff:rigid}) and establish local regularity results with respect to the orientation angle (Section \ref{subsec:ff:stab}) for the far-field patterns of a target undergoing rigid motions in three dimensions. 

\subsection{Layer potential technique}\label{subsec:ff:lp}
Let $\Omega \subset \mathbb{R}^3$ be a bounded, simply connected domain with $C^2$ boundary, and let $C(\partial\Omega)$ denote the space of continuous functions on $\partial\Omega$. 

The fundamental solution of the Helmholtz equation in $\RR^3$ is given by
\[
\Phi(x,y) := \frac{1}{4\pi}\frac{e^{ik|x-y|}}{|x-y|}.
\]
The single-layer operator $S_\Omega : C(\partial\Omega)\to C(\partial\Omega)$ and the double-layer operator $K_\Omega : C(\partial\Omega)\to C(\partial\Omega)$ are defined, respectively, by
\begin{align*}
(S_\Omega\phi)(x)& = \int_{\partial\Omega} \Phi(x,y)\phi(y)\,ds(y), \quad x\in\p\Om,\\
(K_\Omega\phi)(x)& = \int_{\partial\Omega} \frac{\partial \Phi(x,y)}{\partial n(y)}\phi(y)\,ds(y), \quad x\in\p\Om,
\end{align*}
where $\phi$ is a density function on $\p\Om$ and $n(y)$ denotes the outward unit normal to $\p\Om$ at $y \in \partial\Omega$.

Let $u^i$ and $u^s$ denote the incident and scattered fields in \eqnref{prob:scattering}, respectively. Then $u^s$ admits a boundary integral representation. More precisely, by Green's formula for the Helmholtz equation and the properties of the acoustic single- and double-layer potentials, there exist a density function $\phi$ on $\p\Om$ and a real coupling parameter $\eta \neq 0$ such that (see \cite{Colton:1998:IAE})
\begin{align}\label{eqn:u^s:orig}
u^s(x)
= \int_{\partial\Omega}
\bigg[
\frac{\partial \Phi(x,y)}{\partial n(y)} - i\eta\,\Phi(x,y)
\bigg]\phi(y)\,ds(y),
\quad x \in \mathbb{R}^3 \setminus \overline{\Omega}. 
\end{align}
Applying the jump relations for layer potentials to \eqref{eqn:u^s:orig} yields the boundary relation 
\begin{align}\label{eqn:u^s:lp}
u^s = \left(\frac{1}{2}I + K_\Omega - i\eta S_\Omega\right)\phi
\quad \mbox{on }\partial{\Omega}.
\end{align}
Imposing the boundary condition $u=0$ on $\p\Om$ in \eqref{prob:scattering}, we obtain the boundary integral equation
\begin{align}\label{eqn:density}
\left(\frac{1}{2}I + K_\Omega - i\eta S_\Omega\right)\phi = -u^i
\quad \text{on } \partial\Omega,
\end{align}
where $u^i = e^{ik x \cdot d}$.
It is known that \eqref{eqn:density} admits a unique solution $\phi \in C(\partial\Omega)$ (see \cite[Theorem~3.11]{Colton:1998:IAE}).
Thus, for a prescribed incident field $u^i$, the scattered field $u^s$ is obtained by first solving \eqref{eqn:density} for $\phi$, and then substituting $\phi$ into the representation formula \eqref{eqn:u^s:lp}.

Taking the limit $|x| \to \infty$ in \eqref{eqn:u^s:orig}, we obtain
\begin{align}
u^s(x) &= \frac{1}{4\pi}\int_{\partial\Omega}
\bigg[ \frac{\partial}{\partial n(y)}\frac{e^{ik|x-y|}}{|x-y|} - i\eta\,\frac{e^{ik|x-y|}}{|x-y|}\bigg]\phi(y)\,ds(y) \notag \\
&= \frac{e^{ik|x|}}{|x|}\left(
\frac{1}{4\pi}\int_{\partial\Omega}\bigg[\frac{\partial e^{-ik\hat{x}\cdot y}}{\partial n(y)} - i\eta\, e^{-ik\hat{x}\cdot y}\bigg]\phi(y)\,ds(y)
+ O\!\left(\frac{1}{|x|}\right)\right). \notag
\end{align}
Comparing this with the far-field expansion \eqref{def:ff}, we obtain the boundary integral representation for the far-field pattern:
\begin{align}
u^{\infty}(\hat{x}) &= \frac{1}{4\pi}\int_{\partial\Omega} \bigg[\frac{\partial e^{-ik\hat{x}\cdot y}}{\partial n(y)} - i\eta\, e^{-ik\hat{x}\cdot y} \bigg]\phi(y)\,ds(y) \notag \\
&= -\frac{i}{4\pi}\int_{\partial\Omega} \big(k\,\hat{x}\cdot n(y) + \eta\big) e^{-ik\hat{x}\cdot y}\,\phi(y)\,ds(y).
\label{eqn:ff:orig}
\end{align}

We emphasize that both the far-field pattern and the associated boundary density depends on the scatterer $\Om$ and the incident direction $d$. To make this dependence explicit, we write $u_\Omega^\infty(\hat{x};d)$ for the far-field pattern and denote by $\phi_d$ the density solving \eqref{eqn:density}.

 \subsection{Transformation of far-field patterns under rigid motions} \label{subsec:ff:rigid}

Rigid motions in three-dimensions are described by compositions of translations and rotations. 
Given $\tau \in \mathbb{R}^3$, we denote the translated scatterer of $\Omega$ by
$\Omega+\tau := \{ z+\tau \mid z \in \Omega \}$. Likewise, for a rotation axis with direction $\omega \in S^2$ on the unit sphere in $\mathbb{R}^3$ and an angle $\vartheta \in (-\pi,\pi]$, we write the rotated scatterer as $R^\omega_\vartheta \Omega := \{ R^\omega_\vartheta z \mid z \in \Omega \}$, where $R^\omega_\vartheta z$ represents the rotation of $z$ by the angle $\vartheta$ about the axis with direction $\omega$. 

We model the scatterer at time $t$, denoted by $\Omega_t$, as a rigidly moved copy of the initial configuration $\Omega_0$ so that \begin{equation}\label{Omt:rigid}
\Omega_t = R^\omega_\vartheta \Omega_0 + \tau
\end{equation}
for some $\tau$, $\omega$, and $\vartheta$, where $\Omega_0$ is assumed to contain the origin.
We consider the problem of recovering the translation vector $\tau$ and the rotation parameters $\omega$ and $\vartheta$ for each $t>0$ such that \eqref{Omt:rigid} holds. This is achieved by matching far-field data, namely by seeking parameters for which
\begin{equation}\label{Omt:ff}
u^\infty_{\Omega_t}(\hat{x};d) = u^\infty_{R^\omega_\vartheta \Omega_0 + \tau}(\hat{x};d),
\end{equation} 
given a fixed incident direction $d$ and multiple observation directions $\hat{x} \in S^2$. Here $u_\Omega^\infty(\hat{x};d)$ denotes the far-field pattern at observation direction $\hat{x} \in S^2$ corresponding to the incident direction $d \in S^2$.

\begin{remark}\label{rmk:nonunique}
The rotation parameter $\vartheta$ in \eqref{Omt:ff} may fail to be uniquely determined, in particular when the scatterer exhibits rotational symmetries; see \cite[Remark~2]{Lee:2025:BOA}. To exclude this ambiguity, we restrict the rotation between consecutive time instances to a sufficiently small interval and assume that the scatterer is not invariant under rotations $R^\omega_\vartheta$ for small angles $\vartheta$.
\end{remark}

We summarize the transformation rules for the far-field pattern induced by translations and rotations of the scatterer $\Omega$ in Lemmas \ref{thm:translation} and \ref{thm:rotation}, respectively, which constitute the key ingredients of our tracking algorithm. These translation and rotation formulas extend the corresponding two-dimensional statements in \cite{Lee:2025:BOA} to three dimensions.
Since their derivations rely on layer potential arguments that are independent of the spatial dimension in \eqref{prob:scattering}, the proofs are omitted.

\begin{lemma}\label{thm:translation}
Let $\tau\in\RR^3$ and the incident field $u^i(z)=e^{ikz\cdot d}$ with direction $d\in S^2$ be fixed. The far-field pattern of the translated scatterer $\Omega+\tau$ satisfies
\begin{align} \label{eqn:translation}
u^\infty_{\Omega+\tau}(\hat x;d) \, =\, e^{-ik\tau\cdot (\hat{x}-d)}u^\infty_{\Omega}(\hat x;d). 
\end{align}
\end{lemma}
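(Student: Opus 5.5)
The plan is to work directly with the boundary integral equation \eqref{eqn:density} and the far-field representation \eqref{eqn:ff:orig}, and to transport everything from $\partial(\Omega+\tau)$ back to $\partial\Omega$ via the change of variables $y = z+\tau$, $z\in\partial\Omega$. Under this map, surface measure is preserved, $ds(y)=ds(z)$, and normals are preserved, $n_{\Omega+\tau}(z+\tau)=n_\Omega(z)$. The kernel $\Phi(x,y)$ depends only on $x-y$, so it is translation invariant.

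First, I will compare the two boundary integral equations. Writing $\tilde\phi(z):=\phi^{\Omega+\tau}_d(z+\tau)$, translation invariance of $\Phi$ and of $\partial\Phi/\partial n(y)$ gives
\[
\big(K_{\Omega+\tau}\phi\big)(x+\tau)=\big(K_\Omega\tilde\phi\big)(x),\qquad
\big(S_{\Omega+\tau}\phi\big)(x+\tau)=\big(S_\Omega\tilde\phi\big)(x).
\]
The right-hand side transforms as $-u^i(x+\tau)=-e^{ik\tau\cdot d}e^{ikx\cdot d}$. So $\tilde\phi$ solves \eqref{eqn:density} on $\partial\Omega$ with data $-e^{ik\tau\cdot d}u^i$. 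By the unique solvability quoted after \eqref{eqn:density} (\cite[Theorem~3.11]{Colton:1998:IAE}) and linearity, this yields $\tilde\phi=e^{ik\tau\cdot d}\phi^{\Omega}_d$. Here it matters that the same coupling parameter $\eta$ is used for both scatterers; that is permissible, since any real $\eta\ne0$ works.

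Second, I will substitute into \eqref{eqn:ff:orig} for $\Omega+\tau$. The factor $k\hat x\cdot n(y)+\eta$ is unchanged. The exponential splits as $e^{-ik\hat x\cdot(z+\tau)}=e^{-ik\tau\cdot\hat x}e^{-ik\hat x\cdot z}$. Combining this with the density relation gives
\[
u^\infty_{\Omega+\tau}(\hat x;d)=e^{-ik\tau\cdot\hat x}e^{ik\tau\cdot d}\,u^\infty_\Omega(\hat x;d),
\]
which is \eqref{eqn:translation}.

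I do not expect a real obstacle. The only delicate point is justifying that the density for the translated domain is exactly the phase-shifted translate of the original density. This rests on uniqueness for \eqref{eqn:density}, and that requires checking that the hypotheses (a $C^2$ boundary and $\eta\neq0$) are translation invariant, which they clearly are.

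As a cross-check, or as an alternative argument avoiding layer potentials, I could note directly that $v(x):=e^{-ik\tau\cdot d}u_{\Omega+\tau}(x+\tau)$ solves \eqref{prob:scattering} for $\Omega$. Uniqueness of the exterior Dirichlet problem then gives $u^s_{\Omega+\tau}(x)=e^{ik\tau\cdot d}u^s_\Omega(x-\tau)$. Using $|x-\tau|=|x|-\hat x\cdot\tau+O(1/|x|)$ in \eqref{def:ff} recovers the same phase factor.
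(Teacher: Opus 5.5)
Your proof is correct and follows essentially the route the paper intends. The paper omits the proof and defers to the dimension-independent layer-potential argument of its two-dimensional predecessor; as there, translation invariance of $\Phi$ and unique solvability of \eqref{eqn:density} give $\phi^{\Omega+\tau}_d(z+\tau)=e^{ik\tau\cdot d}\phi^{\Omega}_d(z)$, after which the factor $e^{-ik\tau\cdot\hat x}$ comes out of \eqref{eqn:ff:orig}, and your alternative check via uniqueness of the exterior Dirichlet problem and $|x-\tau|=|x|-\hat x\cdot\tau+O(1/|x|)$ is also valid.
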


\begin{lemma}\label{thm:rotation}
Let $\vartheta\in \R$, $\omega \in S^2$, and  the incident field $u^i(z)=e^{ikz\cdot d}$ with the direction $d\in S^2$ be fixed. The far-field pattern of the rotated scatterer $R^\omega_\vartheta\Om$ satisfies
\begin{align} \label{eqn:rotation}
u^\infty_{R^\omega_\vartheta \Omega}(\hat x;d) \, =\, u^\infty_{\Omega}(R^\omega_{-\vartheta}\hat x;R^\omega_{-\vartheta}d). 
\end{align}
\end{lemma}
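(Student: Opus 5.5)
The plan is to transport the boundary integral equation \eqref{eqn:density} for the rotated scatterer back to the reference domain $\Omega$ by the change of variables $y = R z$ with $R := R^\omega_\vartheta$, and then to read off the far-field pattern from the representation \eqref{eqn:ff:orig}. Two facts about $R$ will be used throughout:
\begin{itemize}
\item $R$ is orthogonal with determinant one, so $|Rz - Rz'| = |z - z'|$ and hence $\Phi(Rz,Rz') = \Phi(z,z')$;
\item the surface measure is preserved, $ds(Rz) = ds(z)$, and the outward normal transforms as $n_{R\Omega}(Rz) = R\, n_\Omega(z)$.
\end{itemize}

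\textbf{Step 1 (operators).} With these facts, I expect a direct computation to give
\[
(S_{R\Omega}\psi)(Rx) = (S_\Omega (\psi\circ R))(x), \qquad (K_{R\Omega}\psi)(Rx) = (K_\Omega(\psi\circ R))(x).
\]
For the double layer this uses $\nabla_y \Phi(Rx,Ry)\cdot R n(y) = R\,\nabla_y\Phi(x,y)\cdot R n(y) = \nabla_y\Phi(x,y)\cdot n(y)$.

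\textbf{Step 2 (densities).} Let $\psi$ be the density solving \eqref{eqn:density} on $\partial(R\Omega)$ with direction $d$. Setting $\tilde\phi := \psi\circ R$ and using Step 1, $\tilde\phi$ solves
\[
\left(\tfrac12 I + K_\Omega - i\eta S_\Omega\right)\tilde\phi = -e^{ik Rx\cdot d} = -e^{ik x\cdot R^{-1}d} \quad\text{on } \partial\Omega.
\]
Here $R^{-1} = R^\omega_{-\vartheta}$, and the right-hand side is the incident field with direction $R^{-1}d \in S^2$. By the unique solvability in \cite[Theorem~3.11]{Colton:1998:IAE}, used with the same coupling parameter $\eta$ for both domains, we get $\tilde\phi = \phi_{R^{-1}d}$, the density on $\Omega$ associated with $R^{-1}d$.

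\textbf{Step 3 (far field).} Substitute $y = Rz$ in \eqref{eqn:ff:orig} for $R\Omega$. This gives
\[
\hat x\cdot n_{R\Omega}(Rz) = \hat x\cdot R n(z) = R^{-1}\hat x\cdot n(z),
\qquad
e^{-ik\hat x\cdot Rz} = e^{-ikR^{-1}\hat x\cdot z},
\]
and the integral becomes exactly the expression \eqref{eqn:ff:orig} for $\Omega$ at observation direction $R^{-1}\hat x$ with density $\phi_{R^{-1}d}$. That is $u^\infty_\Omega(R^\omega_{-\vartheta}\hat x; R^\omega_{-\vartheta}d)$, which is \eqref{eqn:rotation}.

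\textbf{Main obstacle.} The main care point is the transformation law for the normal derivative of the kernel in the double-layer operator, which should follow from the chain rule together with orthogonality. One must also confirm that the orientation of the normal is preserved because $\det R = 1$.

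An alternative check is PDE-level: $v(x) := u_{R\Omega}(Rx)$ solves \eqref{prob:scattering} on $\Omega$ with incident direction $R^{-1}d$, since the Laplacian and the radiation condition are rotation invariant. Comparing the asymptotics \eqref{def:ff} then gives the same identity. This can serve as a consistency check.
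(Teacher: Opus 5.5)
Your proposal is correct and follows the same layer-potential route the paper intends. The paper omits the proof and defers to the dimension-independent argument of its two-dimensional predecessor: pull the boundary integral equation \eqref{eqn:density} back to $\partial\Omega$ via $y=Rz$, identify the pulled-back density as $\phi_{R^{-1}d}$ by uniqueness, and substitute into \eqref{eqn:ff:orig}. One small remark: $n_{R\Omega}(Rz)=R\,n_\Omega(z)$ holds for any orthogonal $R$, since an orthogonal map sends the exterior to the exterior, so $\det R=1$ is not actually needed at that step.
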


\begin{remark}
The translation and rotation formulas in Lemmas~\ref{thm:translation} and~\ref{thm:rotation} exploit the symmetry of the Green’s function and therefore assume a homogeneous background medium.
The approach may nevertheless extend to certain structured inhomogeneous settings, such as piecewise homogeneous media \cite{Liu:2010:DIO}, where such symmetry is locally retained.
\end{remark}

 \subsection{Lipschitz property of far-field patterns in rotation} \label{subsec:ff:stab}

 We investigate the local Lipschitz continuity of the far-field pattern with respect to rotations of a three-dimensional scatterer, estimating its sensitivity to small perturbations of the rotation angle. This property will be exploited in Section~\ref{subsec:bo:angle} to design a prior distribution for Bayesian optimization for recovering the scatterer’s orientation. This in turn enables an analysis of the resulting convergence behavior.
 
The proof below follows the strategy of \cite{Lee:2025:BOA}, leveraging properties of layer potential operators and asymptotic expansions, and is therefore only briefly outlined.

\begin{theorem} \label{thm:angle_stability}
Let $k$ be the wavenumber and $d \in S^2$ be the direction of the incident field. For a fixed $\omega \in S^2$, there exists a constant $C=C(\Om,k,d)$ such that
\begin{align} \label{ineq:angle_stability}
\left| u_{\Omega}^\infty(\hat{x};d) - u_{R^\omega_\vartheta \Omega}^\infty(\hat{x};d) \right|
\leq C |\vartheta|  \quad\mbox{for } |\vartheta|\ll 1.
\end{align}
\end{theorem}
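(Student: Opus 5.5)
The plan is to reduce the estimate to the smoothness of the far-field pattern of the fixed reference scatterer $\Omega$ with respect to both of its arguments, using Lemma~\ref{thm:rotation}. That lemma gives
\[
u^\infty_{R^\omega_\vartheta\Omega}(\hat x;d)=u^\infty_\Omega(R^\omega_{-\vartheta}\hat x;R^\omega_{-\vartheta}d),
\]
so the left-hand side of \eqref{ineq:angle_stability} becomes
\[
\left|u^\infty_\Omega(\hat x;d)-u^\infty_\Omega(R^\omega_{-\vartheta}\hat x;R^\omega_{-\vartheta}d)\right|.
\]
We split this by the triangle inequality into two differences:
\[
\left|u^\infty_\Omega(\hat x;d)-u^\infty_\Omega(R^\omega_{-\vartheta}\hat x;d)\right|
+\left|u^\infty_\Omega(R^\omega_{-\vartheta}\hat x;d)-u^\infty_\Omega(R^\omega_{-\vartheta}\hat x;R^\omega_{-\vartheta}d)\right|.
\]
The first is a variation in the observation direction only, and the second a variation in the incident direction only. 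Since $R^\omega_{-\vartheta}$ is a rotation, we have $|R^\omega_{-\vartheta}y-y|\le|\vartheta|$ for every $y\in S^2$; indeed $|R^\omega_{-\vartheta}y-y|=2|\sin(\vartheta/2)|\,|y-(y\cdot\omega)\omega|\le|\vartheta|$. It therefore suffices to show that $u^\infty_\Omega$ is Lipschitz in each argument, uniformly in the other.

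For the observation variable we use the representation \eqref{eqn:ff:orig} with fixed density $\phi_d$. The kernel $(k\hat x\cdot n(y)+\eta)e^{-ik\hat x\cdot y}$ is smooth in $\hat x$, with gradient bounded by $C(k,\eta)(1+\max_{y\in\partial\Omega}|y|)$ uniformly for $y\in\partial\Omega$. Hence
\[
|u^\infty_\Omega(\hat x;d)-u^\infty_\Omega(\hat x';d)|\le C\,|\partial\Omega|\,\|\phi_d\|_{C(\partial\Omega)}\,|\hat x-\hat x'|.
\]
The norm $\|\phi_d\|_{C(\partial\Omega)}$ is bounded via the bounded inverse of $A:=\tfrac12 I+K_\Omega-i\eta S_\Omega$, which exists by \cite[Theorem~3.11]{Colton:1998:IAE}: $\|\phi_d\|\le\|A^{-1}\|\,\|e^{ikx\cdot d}\|_\infty=\|A^{-1}\|$.

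For the incident variable, the operator $A$ does not depend on $d$, so
\[
\phi_d-\phi_{d'}=-A^{-1}\bigl(e^{ikx\cdot d}-e^{ikx\cdot d'}\bigr).
\]
Since $|e^{ikx\cdot d}-e^{ikx\cdot d'}|\le k|x|\,|d-d'|$, this gives $\|\phi_d-\phi_{d'}\|_\infty\le\|A^{-1}\|\,k\max_{\partial\Omega}|x|\,|d-d'|$. Inserting this into \eqref{eqn:ff:orig}, where the kernel is bounded by $(k+|\eta|)/(4\pi)$, yields a Lipschitz bound in $d$ uniform in $\hat x$. (Alternatively, reciprocity $u^\infty(\hat x;d)=u^\infty(-d;-\hat x)$ would reduce this case to the first, but the direct argument is simpler.)

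Combining the two bounds gives \eqref{ineq:angle_stability} with $C$ depending on $\Omega$, $k$, $\eta$, and the two geometric quantities $\max_{\partial\Omega}|x|$ and $|\partial\Omega|$. The constant $\eta$ is a fixed choice, so $C=C(\Omega,k,d)$ as claimed; the estimate in fact holds for all $\vartheta$, not only for $|\vartheta|\ll1$.

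The only step requiring care is the invertibility of $A$ on $C(\partial\Omega)$ with a bound independent of $d$. This is exactly where the combined-field formulation (with $\eta\neq0$) is used, since it excludes interior resonances. Because $A$ is independent of $d$ and $\hat x$, no perturbation argument for the boundary operator is needed. This is the advantage of rotating the directions via Lemma~\ref{thm:rotation} rather than rotating the boundary itself, which would require a Neumann-series expansion of $A$ under domain perturbation as in the two-dimensional strategy of \cite{Lee:2025:BOA}.
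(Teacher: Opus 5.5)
Your proposal is correct and follows essentially the same route as the paper. You move the rotation onto the directions via Lemma~\ref{thm:rotation}, control $\phi_d-\phi_{R^\omega_{-\vartheta}d}$ through the $d$-independent bounded inverse of $\tfrac12 I+K_\Omega-i\eta S_\Omega$, and bound the kernel's variation in $\hat x$; your two triangle-inequality terms are exactly the paper's $I_1$ and $I_2$. The differences are minor but favourable to you: the coordinate-free chord bound $|R^\omega_{-\vartheta}y-y|\le|\vartheta|$ plus a gradient estimate gives a bound valid for all $\vartheta$ and uniform in $\omega$, avoiding the paper's small-angle expansions and $O(\vartheta^2)$ remainders. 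Your closing contrast with a Neumann-series approach does not distinguish you from this paper, which also rotates the directions rather than the boundary.
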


\begin{proof}
Without loss of generality, we set $\omega = (0,0,1) \in S^2$ via a change of axes and drop $\omega$ from $R^\omega_\vartheta$ for notational simplicity.  By \eqref{eqn:ff:orig} and \eqref{eqn:rotation}, we obtain
\begin{align}
& \notag \left| u_{\Omega}^\infty(\hat{x};d) - u_{R_\vartheta \Omega}^\infty(\hat{x};d) \right| =\left| u_{\Omega}^\infty(\hat{x};d) - u_\Omega^\infty(R_{-\vartheta}\hat{x}, R_{-\vartheta}d) \right| \\\label{eqn:estm:ff} 
&  =  \frac{1}{4\pi}\bigg|\int_{\p\Omega} \left[\big(k\hat{x} \cdot n(y) +\eta\big)e^{-ik\hat{x}\cdot y} \phi_d(y) - \big(kR_{-\vartheta}\hat{x} \cdot n(y) +\eta\big)e^{-ikR_{-\vartheta}\hat{x}\cdot y} \phi_{R_{-\vartheta}d}(y) \right] \,ds(y)\bigg|.
\end{align}

Denote the projection onto the $x_1x_2$-plane by $P$ and let $x\in\p\Om$ be given by 
\[x = (\|Px\|\cos s, \|Px\|\sin s, x_3)\]
for some $s\in\RR$. Since
\(R_{-\vartheta}x = (\|Px\|\cos(s-\vartheta), \|Px\|\sin(s-\vartheta), x_3) \),
it holds that for $y=(y_1,y_2,y_3)\in \RR^3$, 
\begin{align}\label{ineq:estm:dot}
 |{x}\cdot y - R_{-\vartheta}{x}\cdot y|\leq \|Px\| \|Py\|_1 |\vartheta|
\end{align}
where $\|Py\|_1=|y_1|+|y_2|$ for the $\ell_1$ norm $\|\cdot\|_1$.
Then using the inequality
$|e^{i\alpha} - e^{i\beta}| \leq |\alpha - \beta|$ for all $\alpha,\beta \in \R$ and \eqref{ineq:estm:dot} with $y=d\in S^2$, we have
\begin{align}\label{ineq:estm:key} 
 \big|e^{ikx\cdot d} - e^{ikx\cdot R_{-\vartheta}d}\big| 
 \leq  k\|Px\| \|Pd\|_1 |\vartheta| \leq \eps(\vartheta),
\end{align}
where \(\eps(\vartheta) := \sqrt{2}kM_{P\Om} |\vartheta|\) and \(M_{P\Omega} = \max\{\|z\| \,:\, z \in \p (P\Omega) \}\), with $P\Omega=\{Pz\,:\, z \in \Omega \}$.

We denote by $\phi_{d}$ and $\phi_{R_{-\vartheta}d}$ the density solutions to \eqref{eqn:density} corresponding to $u^i(x) = e^{ikx\cdot d}$ and $u^i(x) = e^{ikx\cdot R_{-\vartheta}d}$, respectively. 
Since the operator $(\frac{1}{2}I+K_\Om-i\eta S_\Om)^{-1}:C(\p\Om)\rightarrow C(\p\Om)$ is bounded for a coupling parameter $\eta$ (see \cite{Colton:1998:IAE}), \eqref{ineq:estm:key} implies that there exists a constant $C$, depending on $\Om$, such that
\begin{align}\label{ineq:estm:density}
\|\phi_d - \phi_{R_{-\vartheta}d} \|_{L^\infty(\p\Om)} \leq C \eps(\vartheta)\quad\mbox{for }|\vartheta|\ll 1.
\end{align}

By \eqref{eqn:estm:ff} and \eqref{ineq:estm:density}, for $|\vartheta| \ll 1$, we derive 
\begin{gather}\label{ineq:estm:ff}
\left| u_{\Omega}^\infty(\hat{x};d) - u_{R_\vartheta \Omega}^\infty(\hat{x};d) \right| \leq 
\frac{1}{4\pi}\,(I_1 +I_2)
\end{gather}
with 
\begin{align}\notag
I_1 & := \int_{\p\Omega} \Big|\big(k\hat{x} \cdot n(y) +\eta\big)e^{-ik\hat{x}\cdot y} - \big(kR_{-\vartheta}\hat{x} \cdot n(y) +\eta\big)e^{-ikR_{-\vartheta}\hat{x}\cdot y} \Big|\cdot |\phi_d(y)| \,ds(y),\\\label{ineq:estm:I2}
I_2 & :=\int_{\p\Omega}\big|kR_{-\vartheta}\hat{x} \cdot n(y) +\eta\big| \cdot\| \phi_d - \phi_{R_{-\vartheta}d} \|_{L^\infty(\p\Om)} \, ds(y)\leq (k+\eta)\, C|\p \Omega| \eps(\vartheta),
\end{align}
where $|\p\Om|$ is the area of $\p\Om$. 
Then Lemma \ref{lem:angle_stability} below and \eqref{ineq:estm:key} yield
\begin{align*}
&\big|\big(k\hat{x} \cdot n(y) +\eta\big)e^{-ik\hat{x}\cdot y} - \big(kR_{-\vartheta}\hat{x} \cdot n(y) +\eta\big)e^{-ikR_{-\vartheta}\hat{x}\cdot y} \big| \\
 & \leq \big|k(e^{-ik\hat{x}\cdot y} \hat{x} - e^{-ikR_{-\vartheta}\hat{x}\cdot y}R_{-\vartheta} \hat{x}) \cdot n(y)\big| + \big|\eta(e^{-ik\hat{x}\cdot y} - e^{-ikR_{-\vartheta}\hat{x}\cdot y})\big|\\
 & \leq \left(\sqrt{2}k(k\|Py\|_1 +1)|\vartheta| + O(\vartheta^2) \right) +  k\eta\|Py\|_1 |\vartheta|.
\end{align*}
Hence, as $\phi_d\in C(\p\Om)$, we obtain $I_1 \leq C|\vartheta| + O(\vartheta^2)$ for some constant $C=C(\Omega,k,d)$. Combining this with \eqref{ineq:estm:ff} and \eqref{ineq:estm:I2}, we complete the proof. 
\end{proof}

\begin{lemma}\label{lem:angle_stability}
The following inequality holds:
\begin{align*}
\big\|e^{-ik\hat{x}\cdot y} \hat{x} - e^{-ikR_{-\vartheta}\hat{x}\cdot y}R_{-\vartheta} \hat{x} \big\|_2 \leq \sqrt{2}\left(k\|Py\|_1 +1\right)|\vartheta| + O(\vartheta^2)\quad\mbox{for }|\vartheta|\ll 1.
\end{align*}
\end{lemma}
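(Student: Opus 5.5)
Since $R_{-\vartheta}\hat{x}$ is a rotation of $\hat{x}$ about the $x_3$-axis (the axis has been normalized to $\omega=(0,0,1)$ as in the proof of Theorem~\ref{thm:angle_stability}), I would split the difference by adding and subtracting $e^{-ikR_{-\vartheta}\hat{x}\cdot y}\hat{x}$:
\begin{align*}
e^{-ik\hat{x}\cdot y} \hat{x} - e^{-ikR_{-\vartheta}\hat{x}\cdot y}R_{-\vartheta} \hat{x}
= \big(e^{-ik\hat{x}\cdot y} - e^{-ikR_{-\vartheta}\hat{x}\cdot y}\big)\hat{x} + e^{-ikR_{-\vartheta}\hat{x}\cdot y}\big(\hat{x} - R_{-\vartheta}\hat{x}\big).
\end{align*}
The triangle inequality in $\|\cdot\|_2$ then leaves two terms to estimate. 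In the first, $\|\hat{x}\|_2=1$, and the scalar factor is controlled by $|e^{i\alpha}-e^{i\beta}|\le|\alpha-\beta|$ together with \eqref{ineq:estm:dot}, with the roles of $x$ and $y$ chosen so that $x=\hat{x}$. This gives the bound $k\|P\hat{x}\|\,\|Py\|_1|\vartheta|\le k\|Py\|_1|\vartheta|$, since $\|P\hat{x}\|\le 1$. In the second term the exponential has modulus one, so it suffices to bound $\|\hat{x}-R_{-\vartheta}\hat{x}\|_2$.

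For that factor I would write $\hat{x}=(\|P\hat{x}\|\cos s,\|P\hat{x}\|\sin s,\hat{x}_3)$. The third components cancel, so
$$\|\hat{x}-R_{-\vartheta}\hat{x}\|_2=\|P\hat{x}\|\,\big|e^{is}-e^{i(s-\vartheta)}\big| = 2\|P\hat{x}\|\,|\sin(\vartheta/2)|\le|\vartheta|.$$
If one prefers to mirror the componentwise estimate \eqref{ineq:estm:dot}, one can instead bound each planar component by $|\vartheta|$ via the mean value theorem. That gives $\sqrt{2}|\vartheta|$, and Taylor expansion of $\cos$ and $\sin$ produces the remainder $O(\vartheta^2)$.

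Summing the two pieces gives the bound $(k\|Py\|_1+1)|\vartheta|$ (up to $O(\vartheta^2)$), which is at most $\sqrt{2}(k\|Py\|_1+1)|\vartheta|+O(\vartheta^2)$, the estimate in Lemma~\ref{lem:angle_stability}. The factor $\sqrt{2}$ is slack. It arises naturally if \eqref{ineq:estm:dot} is applied with an $\ell_1$-type bound on $P\hat{x}$, using $\|P\hat{x}\|_1\le\sqrt{2}$.

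The only delicate point is bookkeeping: \eqref{ineq:estm:dot} must be applied with the rotated vector being $\hat{x}$ and not $y$, which the symmetry $R_{-\vartheta}\hat{x}\cdot y=\hat{x}\cdot R_{\vartheta}y$ and $\|PR_\vartheta y\|=\|Py\|$ justify. The $O(\vartheta^2)$ term appears only if one linearizes the rotation rather than using the exact chord length. I expect no substantive obstacle.
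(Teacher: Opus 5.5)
Your proof is correct, and it gives the sharper bound $(k\|Py\|_1+1)|\vartheta|$, valid for every $\vartheta$, with neither the factor $\sqrt{2}$ nor the $O(\vartheta^2)$ remainder. The paper instead works coordinate by coordinate. It writes $\hat{x}=(r\cos s,r\sin s,x_3)$ and expands the squared norm componentwise. Each planar component is bounded, via $\cos(s-\vartheta)=\cos s+\vartheta\sin s+O(\vartheta^2)$ and its sine analogue, by $|e^{-ik\hat{x}\cdot y}-e^{-ikR_{-\vartheta}\hat{x}\cdot y}|+|\vartheta|+O(\vartheta^2)$. The third component is bounded by $|x_3|\,k\|Py\|_1|\vartheta|$ using \eqref{ineq:estm:key}, and the pieces are recombined with $r^2+x_3^2=1$. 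Inside each coordinate this is the same add-and-subtract you perform. Bounding the two planar coordinates separately by the same quantity is what produces the $\sqrt{2}$, and linearizing the rotation is what produces the remainder. Your splitting at the vector level, together with the exact chord length $2\|P\hat{x}\|\,|\sin(\vartheta/2)|$, avoids both. The paper's computation has the advantage of mirroring the componentwise structure of \eqref{ineq:estm:dot} and the two-dimensional argument. Two of your side remarks are slightly off, though they do not affect the proof. First, the $\sqrt{2}$ in the statement comes from that summation of two equal planar bounds, not from $\|P\hat{x}\|_1\le\sqrt{2}$. Second, no symmetry argument is needed to invoke \eqref{ineq:estm:dot}: the rotation already acts on $\hat{x}$, so the inequality applies verbatim with $x=\hat{x}$.
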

	
\begin{proof}
Writing $\hat{x} = (r\cos s, r\sin s, x_3)$ with $r=\sqrt{x_1^2+x_2^2}$ and some
$s \in \mathbb{R}$, we have
\beq\label{eqn:estm:polar}
\begin{aligned}
\big\|e^{-ik\hat{x}\cdot y} \hat{x} - e^{-ikR_{-\vartheta}\hat{x}\cdot y}R_{-\vartheta} \hat{x} \big\|_2^2
=&\ \left(re^{-ik\hat{x}\cdot y}\cos s - re^{-ikR_{-\vartheta}\hat{x} \cdot y}\cos(s-\vartheta)\right)^2\\
 &+ \left(re^{-ik\hat{x}\cdot y}\sin s- e^{-ikR_{-\vartheta}\hat{x} \cdot y}\sin(s-\vartheta)\right)^2\\
 &+ (x_3e^{-ik\hat{x}\cdot y} - x_3e^{-ikR_{-\vartheta}\hat{x}\cdot y})^2.
\end{aligned}
\eeq
Since  $\cos(s-\vartheta)  = \cos s + \vartheta\sin s + O(\vartheta^2)$ holds by trigonometric identities and the small-angle expansions, it follows that
\begin{align*}
\left|e^{-ik\hat{x}\cdot y}\cos s - e^{-ikR_{-\vartheta}\hat{x} \cdot y}\cos(s-\vartheta)\right|
\leq \, \left|e^{-ik\hat{x}\cdot y} - e^{-ikR_{-\vartheta}\hat{x} \cdot y}\right| + |\vartheta|+ O(\vartheta^2),
\end{align*}
and the same bound holds for $\left|e^{-ik\hat{x}\cdot y}\sin s - e^{-ikR_{-\vartheta}\hat{x} \cdot y}\sin(s-\vartheta)\right|$.
By \eqref{ineq:estm:key}, we also have
$$\left|e^{-ik\hat{x}\cdot y} - e^{-ikR_{-\vartheta}\hat{x} \cdot y}\right| \leq  k\|Py\|_1 |\vartheta|.$$ 
Therefore, from \eqref{eqn:estm:polar} and the identity $r^2+x_3^2 =1$, we prove the lemma. 
\end{proof}

We remark that the bound in Lemma \ref{lem:angle_stability} is consistent with the corresponding two-dimensional result in \cite[Lemma 3.4]{Lee:2025:BOA}. 

\section{Bayesian optimization for orientation recovery} \label{sec:bo} 
Suppose that the scatterer $\Omega$ undergoes a rigid motion in $\R^3$, resulting in the configuration
\begin{equation} \label{eqn:bo:rigid}
   \Omega^* = R_{\vartheta^*}^{\omega^*}\Omega + \tau^*, 
\end{equation}    
where $\tau^* \in \R^3$ is a translation vector, and $R_{\vartheta^*}^{\omega^*}$ denotes the rotation by an angle $\vartheta^* \in (-\pi,\pi]$ about the rotation axis $\omega^* \in S^2$.
Recovering the orientation of $\Omega^*$ in \eqnref{eqn:bo:rigid} involves three degrees of freedom, while the parameter vector $(\vartheta^*,\omega^*)$ belongs to $\RR^4$, which introduces redundancy and motivates a parameterization in $\RR^3$. 

To this end, we adopt the $X$--$Y$--$Z$ (roll--pitch--yaw) fixed-angle representation \cite{Craig:2005:ITR}. 
For any $\omega \in S^2$ and $\vartheta \in (-\pi,\pi]$, there exist angles  $\alpha,\beta,\gamma\in (-\pi,\pi]$ such that $R_{\vartheta}^\omega$ can be expressed as a composition of rotations about the coordinates axes $\hat{e}_1$, $\hat{e}_2$ and $\hat{e}_3$, given by
$$R_{\vartheta}^\omega=R_{(\alpha,\beta,\gamma)}, \quad\mbox{where }  R_{(\alpha,\beta,\gamma)} := R_{\gamma}^{\hat{e}_3} R_{\beta}^{\hat{e}_2}R_{\alpha}^{\hat{e}_1}.$$
In other words,
\begin{align} \label{eqn:rpy2mat}
    R_{\vartheta}^\omega 
    &= \begin{pmatrix}
        \cos\gamma &-\sin\gamma&0\\
        \sin\gamma & \cos\gamma&0\\
        0&0&1
    \end{pmatrix}
    \begin{pmatrix}
        \cos\beta& 0 &\sin\beta\\
        0&1&0\\
        -\sin\beta& 0 & \cos\beta
    \end{pmatrix}
    \begin{pmatrix}
        1&0&0\\
        0&\cos\alpha &-\sin\alpha\\
        0&\sin\alpha & \cos\alpha
    \end{pmatrix}. 
\end{align} 
The angles $\alpha$, $\beta$, and $\gamma$, referred to as roll, pitch, and yaw, correspond to rotations about the $x_1$-, $x_2$-, and $x_3$-axes of a fixed reference frame, respectively. This representation is unique for an arbitrary three-dimensional rotation, except in the singular case $\beta = \pm \pi/2$, known as {\it gimbal lock}. That is, if we denote by $R_{i,j}$ the $(i,j)$ entry of the $3\times3$ matrix $R_\vartheta^\omega$, then away from the singular case (i.e., when $|R_{3,1}|\neq 1$), the angles $(\alpha,\beta,\gamma)$ of $R_\vartheta^\omega$ can be determined from $R_{i,j}$ via 
\begin{align} \label{eqn:mat2rpy}
   \begin{cases}
        \alpha = \operatorname{atan2}\left({R_{3,2}},{R_{3,3}}\right),\\
        \beta = -\operatorname{atan2}\big({R_{3,1}},{\sqrt{R_{1,1}^2+R_{2,1}^2}} \big),\\
        \gamma = \operatorname{atan2}\left({R_{2,1}},{R_{1,1}}\right),
    \end{cases}
\end{align}
where $\operatorname{atan2}$ is the two-argument arctangent.

We now express $\Omega^*$ in \eqnref{eqn:bo:rigid} as 
\begin{align}\label{eqn:bo:target}
    \Omega^* = R_{\bm{\theta}^*}\Omega+\tau^*,\quad\mbox{where }\bm{\theta}^*=(\alpha^*,\beta^*,\gamma^*)\in (-\pi,\pi]^3,
\end{align}
by introducing the angle triple $(\alpha^*,\beta^*,\gamma^*)$ so that $R_{\vartheta^*}^{\omega^*}=R_{(\alpha^*,\beta^*,\gamma^*)}$.

Our objective is to determine both the translation $\tau \in \R^3$ and the orientation parameters $\bm{\theta}=(\alpha,\beta,\gamma)\in(-\pi,\pi]^3$, fixing an incident direction $d$, such that 
\begin{align}\label{eqn:bo:ff} 
    u^{\infty}_{R_{\bm{\theta}}\Omega +\tau}(\hat{x};d) \approx u^\infty_{\Omega^*}(\hat{x};d)
\end{align}
for all measurement directions $\hat{x} \in S^2$. This constitutes one time step in the tracking process. The complete tracking framework will be discussed in Section \ref{sec:framework}, including practical configurations with finitely many measurement directions $\hat{x}$.

We remark that, compared with the two-dimensional setting, this three-dimensional formulation is more challenging due to the increased number of angular parameters and the higher-dimensional search space. We address this challenge via a Bayesian optimization scheme with analytically designed priors. To justify the proposed scheme, we prove posterior consistency for the underlying Bayesian model in Section \ref{subsubsec:bo:analysis}.  

 \subsection{Overview of Bayesian optimization} \label{subsec:bo:bo}

We provide a brief review on Bayesian optimization, which enables efficient optimization of noisy objective functions via adaptive and uncertainty-aware sampling.
    
Bayesian optimization is a sequential framework for efficiently locating the optimizer of an unknown or implicitly defined objective function $f:X\to\mathbb{R}$ by modeling $f$ probabilistically. Starting from a prior distribution over $f$, the method sequentially selects sample points via an acquisition function and updates the posterior using the resulting observations, thereby enabling a substantial reduction in function evaluations compared with standard optimization approaches; see \cite{Brochu:2010:TBO} for further details.

In particular, we consider the problem of minimizing an objective function $f:X\to\mathbb{R}$. Let $P(f)$ denote a prior distribution over $f$, and let $x_i\in X$ represent the \(i\)-th evaluation point for $i=1,2,\dots$. Assume that $f$ can be evaluated at $x_i$, and that the corresponding observation $y_i$ is affected by independent Gaussian noise; that is, 
\begin{align} \label{eqn:bo:observation}
y_i = f(x_i) + \varepsilon_i, \quad \varepsilon_i \overset{\mathrm{i.i.d.}}{\sim} \mathcal{N}(0,\nu),
\end{align}
so that $y_i \sim \mathcal{N}(f(x_i),\nu)$, where $\nu>0$ denotes the noise variance. Then, given a dataset $\mathcal{D}_{1:N}=\{(x_1,y_1),\ldots,(x_N,y_N)\}$, the posterior distribution of \(f\) satisfies
\begin{align} \label{rel:posterior}
P(f | \mathcal{D}_{1:N}) \propto P(\mathcal{D}_{1:N} | f)\,P(f),
\end{align}
where $P(\mathcal{D}_{1:N} | f)$ is a multivariate normal distribution with the mean $(f(x_1),\ldots,f(x_N))$ and the covariance matrix $\nu I_N$.

With a sufficiently large number of observations $N$, an estimate of the optimizer $x^*$ of $f$ can be obtained by minimizing a surrogate model for $f$ derived from the posterior. A Gaussian process prior is commonly adopted, as it enables closed-form expressions for the posterior distribution (see, e.g., \cite{OHagan:1978:CFO,Mockus:1994:ABA}).

To efficiently construct the dataset $\mathcal{D}_{1:N}$, we employ an acquisition function, denoted by ${a}(\cdot)$, which selects the evaluation points $x_i$.
After an initial dataset $\mathcal{D}_{1:i_0}$ is obtained from evaluations of $f$ at a set of predefined (or randomly chosen) points $x_1, \dots, x_{i_0}$, the subsequent points $x_{i_0 +1}, \dots, x_N$ are selected sequentially by maximizing the acquisition function induced by the updated posterior distribution. More precisely, given $\mathcal{D}_{1:i}$ with $i\geq i_0$, the next query point is chosen according to
\begin{align}  \label{eqn:bo:sample}
x_{i+1} = \arg\max_{x\in X} a(x| \mathcal{D}_{1:i}).
\end{align}
The resulting observation $y_{i+1}$ is then collected under the noise model in \eqref{eqn:bo:observation}. 
This procedure is repeated until the full dataset $\mathcal{D}_{1:N}$ is collected. Common choices of acquisition functions include Probability of Improvement \cite{Kushner:1964:NML}, Expected Improvement \cite{Mockus:1978:ABS}, and Lower Confidence Bound (or Upper Confidence Bound for maximization problems) \cite{Cox:1992:SMG,Cox:1997:SDO}. The behavior and performance of these acquisition functions depend on the associated parameters.

\subsection{Inference of the orientation angle}\label{subsec:bo:angle}
In this subsection, we apply the Bayesian optimization framework introduced in Section \ref{subsec:bo:bo} to recover the orientation parameter $\bm{\theta}^*$ of the moving scatterer $\Om^*$ described in \eqnref{eqn:bo:target}. 

As mentioned at the beginning of this section, our goal is to determine the angle parameters $\bm{\theta} = (\alpha,\beta,\gamma)\in(-\pi,\pi]^3$ satisfying \eqnref{eqn:bo:ff}. To achieve this, we define the objective function $f:(-\pi,\pi]^3 \to \R\cup\{+\infty\}$ by
\begin{align} \label{def:bo:objective}
f(\bm{\theta}) := \min_{\tau\in\mathbb{R}^3,\ \|\tau\|<M_\tau} \left\| u^\infty_{R_{\bm{\theta}}\Omega+\tau} - u^\infty_{\Omega^*} \right\|_{L^2(S^2)}, \quad \|\bm{\theta}\|<M_{\bm{\theta}},
\end{align}
and set $f(\bm{\theta})=+\infty$ whenever $\|\bm{\theta}\|\geq M_{\bm{\theta}}$. Here, $M_\tau$, $M_{\bm{\theta}}>0$ are prescribed constants defining the admissible search region. Moreover, $\|\cdot\|$ denotes the Euclidean norm, while $\|\cdot\|_{L^2(S^2)}$ denotes the $L^2$-norm of the far-field patterns on the unit sphere $S^2$. The restriction $\|\bm{\theta}\|<M_{\bm{\theta}}$ confines the optimization procedure to sufficiently small orientation parameters. 
In Section \ref{subsec:framework:method}, we introduce a discrete version of \eqref{def:bo:objective} for computational implementation.

To account for the effect of measurement noise, which is inherent in practical measurement settings, we introduce the noisy far-field data
\begin{align*} 
u^\infty_{\text{meas}} := u^\infty_{\Omega^*} + \eps^{\text{meas}}, 
 \end{align*} 
where $\eps^{\text{meas}}$ denotes the measurement noise. Replacing the exact data in \eqref{def:bo:objective} with $u^\infty_{\text{meas}}$, we define the corresponding noisy objective function $f_{\text{meas}}$ by
\begin{align*} 
f_{\text{meas}}(\bm{\theta}) := \min_{\tau\in\mathbb{R}^3,\ \|\tau\|<M_\tau} \left\| u^\infty_{R_{\bm{\theta}}\Omega+\tau} - u^\infty_{\text{meas}} \right\|_{L^2(S^2)}, \quad \|\bm{\theta}\|<M_{\bm{\theta}},
\end{align*} 
and set $f_{\text{meas}}(\bm{\theta})=+\infty$ when $\|\bm{\theta}\|\geq M_{\bm{\theta}}$. Here, the constants $M_\tau$ and $M_{\bm{\theta}}$ are taken to be identical to those in \eqref{def:bo:objective}. 
We may interpret $f_{\text{meas}}$ as a noisy counterpart of the ideal objective function $f$, namely as $f(\bm{\theta})+\varepsilon$,
where the perturbation $\varepsilon$ arises from measurement noise in the far-field data. This motivates formulating the inverse problem of determining the orientation angle as a Bayesian optimization problem, in which the noisy objective function $f_\text{meas}$ is assumed to follow the observational model in \eqnref{eqn:bo:observation}; see \eqnref{def:bo:yi} below.

The objective function $f(\bm{\theta})$ in \eqnref{def:bo:objective} attains its minimum at the true orientation $\bm{\theta}=\bm{\theta}^*$, where the inner minimizer in $\tau$ coincides with the true displacement $\tau^*$. The limited search regions in $\tau$ and $\bm{\theta}$ reflect the assumption that the scatterer moves only slightly between successive time steps (see Section \ref{subsec:framework:method} for further details). 
We note that, for fixed \(\bm{\theta}\), the inner minimization with respect to $\tau$ can be reformulated using the translation formula \eqref{eqn:translation} as
\begin{align} \label{eqn:bo:translated}
 \min_{\tau\in\mathbb{R}^3,\ \|\tau\|<M_\tau} \Big\|e^{-ik\tau\cdot(\hat{x}-d)}\,u^\infty_{R_{\bm{\theta}}\Omega}(\,\cdot\,;d) - u^\infty_{\Omega^*}(\,\cdot\,;d) \Big\|_{L^2(S^2)}, 
\end{align}
 which can be solved using a gradient-based optimizer such as Adam \cite{Kingma:2014:AMS}.  
 
Although \eqnref{eqn:bo:translated} avoids repeated computations of the far-field pattern for translated configurations $R_{\bm{\theta}}\Om+\tau$, evaluation of $f(\bm{\theta})$ remains computationally expensive due to the embedded minimization in \eqnref{def:bo:objective}. Consequently, Bayesian optimization is a natural choice because of its efficiency in terms of the number of function evaluations. Furthermore, as mentioned above, perturbations induced by measurement noise can be naturally incorporated into the Bayesian optimization framework through an additive Gaussian noise model.

We formulate the Bayesian optimization problem for recovering the orientation parameter $\bm{\theta}^*$ of the moving scatterer $\Om^*$ in \eqnref{eqn:bo:target} as follows.
Let $\bm{\theta}_i$ denote the $i$-th evaluation point and assume that the corresponding noisy observation $y_i$ of $f(\bm{\theta}_i)$ follows the model (see \eqnref{eqn:bo:observation})
\begin{align} \label{def:bo:yi}
y_i = f(\bm{\theta}_i) + \varepsilon_i, \quad \varepsilon_i \overset{\mathrm{i.i.d.}}\sim \mathcal{N}(0,\nu),
\end{align}
where the variance parameter $\nu>0$ represents the level of measurement noise. By taking a prior $P(f)$ and a fixed dataset of $N$ observations $\mathcal{D}_{1:N}=\{(\bm{\theta}_1,y_1),\ldots,(\bm{\theta}_N,y_N)\}$, we compute the posterior distribution via \eqref{rel:posterior}, which yields a posterior estimate of the orientation parameter $\bm{\theta}^*$ associated with the target $\Omega^*$. We describe our choices of the prior distribution and the acquisition function in what follows.  
    
\subsubsection{Prior on the objective function}\label{subsubsec:prior}
A key element for the prior distribution is a candidate optimizer $\hat{\bm{\theta}}$ for the objective function $f$ in \eqref{def:bo:objective}. This candidate is incorporated into the prior distribution and is utilized as the initial point $\bm{\theta}_1$ in the Bayesian optimization process. It is selected as follows.
    
Note that since $|e^{-ik\tau\cdot(\hat{x}-d)}|=1$ for any $\tau \in \R^3$, the modulus of the far-field pattern is invariant under translation and, thus,
\begin{align*}
\left|u_{\Omega^*}^\infty(\hat x;d) \right| = \left|u^\infty_{R_{\bm{\theta}^*}\Omega+\tau^*}(\hat x;d) \right| = \left|e^{-ik\tau^*\cdot (\hat{x}-d)}u^\infty_{R_{\bm{\theta}^*}\Omega}(\hat x;d) \right| = 
\left|u^\infty_{R_{\bm{\theta}^*}\Omega}(\hat x;d) \right|.
\end{align*}
Consequently, if a parameter ${\bm{\theta}}$ coincides with the true orientation $\bm{\theta}^*$, the far-field pattern $u^\infty_{R_{{\bm{\theta}}}\Omega}$ must have the same modulus as $u_{\Omega^*}^\infty$ for all $\hat{x}$. This observation motivates defining a candidate $\hat{\bm{\theta}}$ for $\bm{\theta}^*$ as the minimizer of the discrepancy between the far-field magnitudes: 
\begin{equation} \label{def:bo:thetahat} 
 \hat{\bm{\theta}} := \arg\min_{\left\|\bm{\theta}\right\| <M_{\bm{\theta}}} \left\| \left|u^\infty_{R_{\bm{\theta}}\Omega} \right| - \left| u^\infty_{\text{meas}} \right| \right\|_{L^2(S^2)}.
 \end{equation}
We set $\bm{\theta}_1= \hat{\bm{\theta}}$ as the initial sample for the optimization process. In the noiseless case ($\nu=0$), where $u^\infty_{\text{meas}} = u^\infty_{\Omega^*}$, the discrepancy term in \eqref{def:bo:thetahat} vanishes at $\bm{\theta}  = \bm{\theta}^*$, yielding $\hat{\bm{\theta}}  = \bm{\theta}^*$. We then define a mapping $h$ that quantifies the deviation of $\hat{\bm{\theta}}$ from the true orientation:
\beq\label{def:h} 
 h(\nu) := \hat{\bm{\theta}}  - \bm{\theta}^*.
\eeq
In this expression, $\nu$ serves as the Gaussian noise level associated with the perturbation induced by the measurement noise through the model \eqref{def:bo:yi}.

Next, we specify the prior distribution $P(f)$ for the objective function, incorporating the candidate optimizer $\hat{\bm{\theta}}$. To this end, we examine the asymptotic behavior of the objective function $f$ in \eqnref{def:bo:objective} near the true optimizer $\bm{\theta}^*$. 
By utilizing the translation property, we have
\begin{align*}
\left|u^\infty_{R_{\bm{\theta}} \Omega + \tau}(\hat{x}) - u^\infty_{\Omega^*}(\hat{x})\right| 
=&\left|u^\infty_{R_{\bm{\theta}} \Omega + \tau}(\hat{x}) - u^\infty_{R_{\bm{\theta}^*} \Omega +\tau^*}(\hat{x})\right| \\
= &\left|e^{-ik\tau\cdot(\hat{x}-d)} u^\infty_{R_{\bm{\theta}}\Omega}(\hat{x}) - e^{-ik\tau^*\cdot(\hat{x}-d)} u^\infty_{R_{\bm{\theta}^*}\Omega}(\hat{x})\right| \\
 = &\left|u^\infty_{R_{\bm{\theta}}\Omega}(\hat{x}) - e^{-ik(\tau^*-\tau)\cdot(\hat{x}-d)} u^\infty_{R_{\bm{\theta}^*}\Omega}(\hat{x})\right| \\
= &\left| u^\infty_{R_{\bm{\theta}}\Omega}(\hat{x})-u^\infty_{R_{\bm{\theta}^*}\Omega}(\hat{x})
+ (1-e^{-ik(\tau^*-\tau)\cdot(\hat{x}-d)})u^\infty_{R_{\bm{\theta}^*}\Omega}(\hat{x})\right|.
\end{align*}
It follows that the $L^2$-norm of the difference satisfies
\begin{align}\label{discrep:1}
&\left\|u^\infty_{R_{\bm{\theta}} \Omega + \tau} - u^\infty_{R_{\bm{\theta}^*} \Omega +\tau^*}\right\|_{L^2(S^2)}
=\left\|u^\infty_{R_{\bm{\theta}}\Omega}-u^\infty_{R_{\bm{\theta}^*}\Omega}\right\|_{L^2(S^2)}+r(\bm{\theta},\tau),
\end{align}
where the remainder term $r$ is of order $O(|\tau^*-\tau|)$. 
Since the remainder term vanishes at $\tau=\tau^*$, it follows that the minimum value with respect to $\tau$ in \eqnref{def:bo:objective} is bounded above by the first term on the right-hand side of \eqnref{discrep:1}, i.e., 
\beq\label{discrep:2}
f(\bm{\theta})\leq \|F(\bm{\theta}) -F(\bm{\theta}^*)\|_{L^2(S^2)}\quad\mbox{with }F(\bm{\theta})= u^\infty_{R_{\bm{\theta}}\Omega}.
\eeq
In view of \eqnref{discrep:1} and \eqnref{discrep:2}, if $\|u^\infty_{R_{\bm{\theta}} \Omega + \tau} - u^\infty_{R_{\bm{\theta}^*} \Omega +\tau^*}\|_{L^2(S^2)}$ attains its minimum at a point $(\bm{\theta},\tau)$, then it must hold that
$$\left|r(\bm{\theta},\tau)\right|\leq 2\|F(\bm{\theta}) -F(\bm{\theta}^*)\|_{L^2(S^2)}= O(\|\bm{\theta} - \bm{\theta}^*\|),$$
where the second equality holds by Theorem \ref{thm:angle_stability}.
Therefore, the objective function $f$ can be approximated as
\begin{equation} \label{eqn:bo:objective:approx0}
f(\bm{\theta})=\|F(\bm{\theta}) -F(\bm{\theta}^*)\|_{L^2(S^2)}+O(\|\bm{\theta} - \bm{\theta}^*\|),
\end{equation}

According to Theorem \ref{thm:angle_stability}, $F$ is locally Lipschitz near $\bm{\theta}^*$ with respect to the $L^2(S^2)$-norm. This property implies that, by Rademacher's theorem  \cite{Evans:2015:MTF, Benyamini:2000:GNF}, $F$ is Fr\'echet differentiable almost everywhere in a neighborhood of $\bm{\theta}^*$. Provided that $\bm{\theta}^*$ does not belong to the exceptional set $E_{\bm{\theta}^*}$ of measure zero, $F$ admits the linear approximation
 \begin{equation} \label{eqn:bo:F:approx}
 F(\bm{\theta}) -F(\bm{\theta}^*) = D F(\bm{\theta}^*)(\bm{\theta}-\bm{\theta}^*) + o\left(\left\|\bm{\theta} - \bm{\theta}^*\right\|\right),
\end{equation}
where $ D F(\bm{\theta}^*)$ denotes the Fr\'echet derivative of $F$ at $\bm{\theta}^*$. Consequently, combining \eqref{eqn:bo:objective:approx0} and \eqref{eqn:bo:F:approx} yields
\begin{equation} \label{eqn:bo:objective:approx}
f(\bm{\theta}) =\left\|D F(\bm{\theta}^*)(\bm{\theta}-\bm{\theta}^*)\right\|_{L^2(S^2)} + \delta(\bm{\theta} -\bm{\theta}^*), \quad \left\| \bm{\theta}  \right\| < M_{\bm{\theta}},
\end{equation}
where the remainder term $\delta$ satisfies $|\delta(\cdot)| \leq C\|\cdot\|$ for some constant $C$ depending on $\Omega$ and $\bm{\theta}^*$.

Numerical experiments, including the examples in Section \ref{sec:num}, suggest that the remainder term in \eqnref{eqn:bo:objective:approx} is negligible relative to the first term (see \cite{Lee:2025:BOA} for a visualization of $f$ in the two-dimensional case).
Based on this observation, we employ the linearized approximation in \eqnref{eqn:bo:objective:approx} as the foundation for the prior model.

We construct a prior distribution for $f$ using the candidate angle $\hat{\bm{\theta}}$ as in \eqref{def:bo:thetahat}. 
In particular, we employ a Gaussian process prior for $f(\bm{\theta})$ given by
\begin{align} \label{eqn:bo:GP} 
P(f) = GP(m,k),   
 \end{align} 
where the mean function $m(\bm{\theta})$ and the covariance function $k(\bm{\theta},{\bm{\theta}'})$ are defined by
\begin{align} \label{eqn:bo:GP:mean} 
m(\bm{\theta}) &= \big\| D F(\hat{\bm{\theta}})(\bm{\theta}-\hat{\bm{\theta}}) \big\|_{L^2(S^2)},\\
\label{eqn:bo:GP:covariance}
k(\bm{\theta},\bm{\theta}') &= k(r;l)= \exp\left(-\frac{r}{l}\right)\quad\mbox{with }r = \|\bm{\theta}-\bm{\theta}'\|.
\end{align}
Note that $k$ defines a covariance kernel $k(\bm{\theta},\bm{\theta}') = k(\|\bm{\theta}-\bm{\theta}'\|;l)$. This corresponds to the exponential kernel (equivalently, the Mat\'ern kernel with smoothness parameter $1/2$), with length-scale parameter $l>0$. 
The mean function is chosen based on the approximations \eqref{eqn:bo:objective:approx} using the potential candidate $\hat{\bm{\theta}}$ for $\bm{\theta}^*$, assuming $\bm{\theta}^* \not \in E_{\bm{\theta}^*}$ as described previously. The covariance function $k$ is selected to account for the limited smoothness of $f$ near $\bm{\theta}^*$, as $f$ is not guaranteed to have a continuous gradient. This choice is guided by the approximation of $f$ in \eqref{eqn:bo:objective:approx0} and Theorem \ref{thm:angle_stability}, which implies that $f$ is approximately locally Lipschitz in $\bm{\theta}$ around $\bm{\theta}=\bm{\theta}^*$. 

  \subsubsection{Acquisition function}\label{subsubsec:acquisition}
For the acquisition function $a(\cdot)$, we employ the Lower Confidence Bound (LCB), defined as 
\begin{equation*}
\text{LCB}(\bm{\theta}) = m_{\text{post}}(\bm{\theta}) - \kappa \cdot\sigma_{\text{post}}(\bm{\theta}),
\end{equation*}
where $m_{\text{post}}$ and $\sigma_{\text{post}}$ are the posterior mean and posterior standard deviation induced by the prior and the data observed up to the current query point. The parameter $\kappa\geq0$ controls the trade-off between exploitation and exploration. This explicit parametrization makes it easier to analyze the acquisition function’s behavior compared to alternatives such as Probability of Improvement or Expected Improvement.
We select a new sample point at each iteration according to \eqref{eqn:bo:sample}:
\beq\label{sample:posterior}
\bm{\theta}_{i+1} = \arg\min_{\bm{\theta}} \text{LCB}(\bm{\theta}),
\eeq
where the posterior is conditioned on $\mathcal{D}_{1:i}$.
Here, we minimize rather than maximize the acquisition function, as it is formulated as a lower confidence bound.

We analyze the selection behavior of new sample points given by \eqnref{sample:posterior}, assuming a single data pair $\mathcal{D}_1=(\bm{\theta}_1,y_1)$ with $\bm{\theta}_1=\hat{\bm{\theta}}$, where $\hat{\bm{\theta}}$ is defined by \eqref{def:bo:thetahat}. 
For $\bm{\theta}^*\notin E_{\bm{\theta}^*}$, the posterior distribution is a Gaussian process. Using \eqref{def:bo:yi} together with \eqnref{eqn:bo:GP:mean} and \eqnref{eqn:bo:GP:covariance}, the mean function and covariance kernel are derived as
\begin{align}  
m_{\text{post}}(\bm{\theta}) 
&= m(\bm{\theta}) + \frac{k(\bm{\theta},\hat{\bm{\theta}})} {k(\hat{\bm{\theta}},\hat{\bm{\theta}})+\nu^2} \left(y_1 - m(\hat{\bm{\theta}})\right) \label{eqn:bo:posterior:mean} 
= m(\bm{\theta}) + \frac{k(\bm{\theta},\hat{\bm{\theta}})} {1+\nu^2} \left(f(\hat{\bm{\theta}})+\eps_1\right),\\
\label{eqn:bo:posterior:covariance} 
k_{\text{post}}(\bm{\theta},\bm{\theta}') 
&= k(\bm{\theta},\bm{\theta}') - \frac{k(\bm{\theta},\hat{\bm{\theta}})} {k(\hat{\bm{\theta}},\hat{\bm{\theta}})+\nu^2} k(\hat{\bm{\theta}},\bm{\theta}')
= k(\bm{\theta},\bm{\theta}') - \frac{k(\bm{\theta},\hat{\bm{\theta}})} {1+\nu^2} k(\hat{\bm{\theta}},\bm{\theta}').
\end{align}
In particular, by \eqref{eqn:bo:objective:approx}, we obtain
\beq\notag
\begin{aligned}  
m_{\text{post}}(\bm{\theta}) 
= m(\bm{\theta}) + \frac{e^{-\|\bm{\theta}-\hat{\bm{\theta}}\|/l}}{1+\nu^2}\left(\|DF(\bm{\theta}^*)(\hat{\bm{\theta}}-\bm{\theta}^*)\|_{L^2(S^2)} +\delta(\hat{\bm{\theta}}-\bm{\theta}^*)+ \eps_1 \right). \\
\end{aligned}
\eeq
The posterior variance is given by
\begin{align} \label{eqn:bo:posterior:variance}
\sigma^2_{\text{post}}(\bm{\theta})
 &=k_{\text{post}}(\bm{\theta},\bm{\theta}) = 1-\frac{e^{-2\|\bm{\theta}-\hat{\bm{\theta}}\|/l}}{1+\nu^2}.
\end{align}

Based on the above expressions, we investigate how the parameter $\kappa$ controls exploration through the acquisition function under the posterior conditioned on $\mathcal{D}_1$. In this setting, all prior information is summarized by the single sample $\bm{\theta}_1=\hat{\bm{\theta}}$.
To promote exploration away from $\hat{\bm{\theta}}$, a new sample point $\bm{\theta}$ must satisfy \[\text{LCB}(\bm{\theta}) < \text{LCB}(\hat{\bm{\theta}}),\] that is,
\begin{align*}
 m_{\text{post}}(\bm{\theta}) - \kappa\cdot\sigma_{\text{post}}(\bm{\theta})
< m_{\text{post}}(\hat{\bm{\theta}}) - \kappa\cdot\sigma_{\text{post}}(\hat{\bm{\theta}}).
\end{align*} 
Then, using the notation $h(\nu)=\hat{\bm{\theta}}-\bm{\theta}^*$ defined in \eqref{def:h}, together with \eqref{eqn:bo:posterior:mean}, \eqref{eqn:bo:posterior:variance}, and the inequality $1-e^{-x} \leq x$ for $x\geq 0$, it follows that 
\begin{align} \notag
&m(\bm{\theta}) - \frac{1-e^{-\|\bm{\theta}-\hat{\bm{\theta}}\|/l}}{1+\nu^2}\,\Big(\|DF(\bm{\theta}^*)h(\nu)\|_{L^2(S^2)} +\delta(h(\nu))+ \eps_1 \Big) \\\label{ineq:bo:exploration}
\ &< \kappa\,\frac{1}{\sqrt{1+\nu^2}}\left({\sqrt{\nu^2+2\|\bm{\theta}-\hat{\bm{\theta}}\|/l} -\nu}\right).
\end{align} 
    
We now consider the small-noise regime ($\nu\ll 1$). Assuming $h$ is continuous at $\nu=0$ so that $h(\nu)\to \bm{0}$ as $\nu\to0$, the inequality \eqref{ineq:bo:exploration} reduces to
\begin{align*}
 m(\bm{\theta})  < \kappa\sqrt{\frac{2\|\bm{\theta}-\hat{\bm{\theta}}\|}{l}}.
\end{align*}
By bounding $\|DF(\hat{\bm{\theta}})(\bm{\theta}-\hat{\bm{\theta}})\|_{L^2(S^2)}$ via the operator norm, it follows from \eqref{eqn:bo:GP:mean} that $m(\bm{\theta}) \leq \|D F(\hat{\bm{\theta}})\|_{\RR^3 \to L^2(S^2)}\|\bm{\theta}-\hat{\bm{\theta}}\|$, where $DF(\hat{\bm{\theta}})$ is linear. Hence, in the small-noise regime, the exploration occurs if the new sample $\bm{\theta}$ satisfies
\begin{align} \label{ineq:bo:kappa} 
    \left\|D F(\hat{\bm{\theta}})\right\|_{\RR^3 \to L^2(S^2)}\sqrt{\frac{l}{2}\,\|\bm{\theta}-\hat{\bm{\theta}}\|} < \kappa. 
\end{align}
Consequently, as suggested by the proof of Theorem \ref{thm:angle_stability}, larger targets require larger values of the parameter $\kappa$ to encourage exploration. This reflects the tendency of $\|D F(\hat{\bm{\theta}})\|_{\RR^3 \to L^2(S^2)}$ to increase with target size. 
The resulting strategy is particularly useful when the initial candidate $\hat{\bm{\theta}}$ is unreliable, for instance when it is far from the true rotation angle due to noise.   

\begin{remark} \label{rmk:BO}
In practice, since most available Bayesian optimization modules are designed for maximization, we instead maximize the negated objective function $-f$. Accordingly, we use the Upper Confidence Bound (UCB) as the acquisition function rather than the LCB. Under this formulation, the exploration criterion \eqref{ineq:bo:kappa} remains consistent. 
\end{remark}

\subsection{Posterior consistency of the Bayesian model} \label{subsubsec:bo:analysis}

To provide a theoretical foundation for the proposed optimization scheme, we establish posterior consistency for the surrogate model of the objective function $f$, given a single observation $\mathcal{D}_1$, as the noise level tends to zero. Let $\mu_{\eps}$ denote the distribution of the measurement noise. Posterior consistency in the $\mu_{\eps}$-almost everywhere sense means that, for $\mu_{\eps}$-almost every noise realization, the posterior distribution concentrates on the true function $f$. In other words, the set of noise realizations for which this convergence fails has $\mu_{\eps}$-measure zero. Posterior consistency has been widely studied in Bayesian statistics \cite{Ghosal:2000:CRP} and Bayesian inverse problems \cite{Vollmer:2013:PCB}. 

We recall two classical results from Gaussian process theory that will be used to prove the posterior consistency theorem below.

\begin{lemma}[Dudley’s entropy theorem \cite{Dudley:2016:SWE,Ledoux:1991:PBS}]
    \label{lem:Dudley} 
    Let $\{X_t\}_{t\in T}$ be a centered Gaussian process indexed by a set $T$, with associated pseudo-metric $\rho_X$. Then 
    \begin{align*}
       \mathbb{E}\big(\sup_{t\in T} X_t \big)\lesssim 
       \int_0^{{\rm diam}(T,\rho_X)} \sqrt{\log N(T,\rho_X; r)} \, dr, 
    \end{align*}
    where $\operatorname{diam}(T,\rho_X)$ is the diameter of $T$ with respect to $\rho_X$, and $N(T,\rho_X; r)$ denotes the minimal number of $\rho_X$-balls of radius $r$ needed to cover $T$.
\end{lemma}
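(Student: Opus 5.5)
The statement is Dudley's entropy bound, a classical result cited from the literature. I would prove it by Talagrand-style generic chaining, specialized to a sequence of nets; this is the standard chaining argument.

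\textbf{Preliminary reductions.} Since $X$ is centered Gaussian, we have $\rho_X(s,t)=(\mathbb{E}|X_s-X_t|^2)^{1/2}$. We may assume the entropy integral is finite, so that $T$ is totally bounded for $\rho_X$. We also interpret $\mathbb{E}\sup_{t\in T} X_t$ as the supremum of $\mathbb{E}\max_{t\in F}X_t$ over finite $F\subset T$. It therefore suffices to prove the bound for finite $T$ with a constant independent of $|T|$. Fix $t_0\in T$. Because $\mathbb{E}X_{t_0}=0$, it is enough to bound $\mathbb{E}\max_t (X_t-X_{t_0})$.

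\textbf{Chaining construction.} Set $D=\operatorname{diam}(T,\rho_X)$ and $r_j=2^{-j}D$. For each $j\ge 0$ choose a minimal $r_j$-net $T_j\subset T$, so that $|T_j|=N(T,\rho_X;r_j)$; take $T_0=\{t_0\}$. Let $\pi_j(t)\in T_j$ be a nearest point to $t$. Since $T$ is finite, $\pi_J(t)=t$ for $J$ large. Telescoping gives
\[
X_t-X_{t_0}=\sum_{j\ge1}\bigl(X_{\pi_j(t)}-X_{\pi_{j-1}(t)}\bigr).
\]
Each increment is a centered Gaussian with standard deviation at most $\rho_X(\pi_j(t),\pi_{j-1}(t))\le r_j+r_{j-1}=3r_j$. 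At level $j$ the number of distinct pairs $(\pi_j(t),\pi_{j-1}(t))$ is at most $|T_j||T_{j-1}|\le N(T,\rho_X;r_j)^2$.

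\textbf{Bounding each level.} I would use the maximal inequality for Gaussians: for $M$ centered Gaussians with standard deviation at most $\sigma$, the expected maximum is at most $\sigma\sqrt{2\log M}$. This follows from Jensen's inequality applied to $\exp(\lambda\max)$, followed by optimizing over $\lambda$. Taking the expectation of the supremum inside the sum over $j$ then yields
\[
\mathbb{E}\max_t (X_t-X_{t_0})\le \sum_{j\ge1} 3r_j\sqrt{4\log N(T,\rho_X;r_j)}.
\]

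\textbf{Conversion to the integral.} Since $N(T,\rho_X;r)$ is nonincreasing in $r$, we have $r_j\sqrt{\log N(r_j)}\le 2\int_{r_{j+1}}^{r_j}\sqrt{\log N(r)}\,dr$. Summing over $j$ gives a bound by a constant multiple of $\int_0^{D}\sqrt{\log N(T,\rho_X;r)}\,dr$, which is the claim with $\lesssim$ hiding an absolute constant.

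\textbf{Main obstacle.} The main delicate point is the measurability and definition of the supremum over an uncountable $T$. The reduction to finite subsets handles this, together with the choice of the separable version of the process. The remaining steps are routine bookkeeping. Because the paper uses the lemma only as a black-box citation, I would present only this sketch and refer to \cite{Ledoux:1991:PBS} for the details.
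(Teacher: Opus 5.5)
Your chaining argument is correct. It is the standard proof given in the references the paper cites; the paper itself gives no proof of this lemma, only the citation. Your bookkeeping checks out: each increment has standard deviation at most $3r_j$, there are at most $N(T,\rho_X;r_j)^2$ pairs per level, and $r_j\sqrt{\log N(T,\rho_X;r_j)}\le 2\int_{r_{j+1}}^{r_j}\sqrt{\log N(T,\rho_X;r)}\,dr$.

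The only step you glossed is the reduction to finite $F\subset T$. Covering numbers with centers required to lie in the set are not monotone under passing to subsets. However, $N(F,\rho_X;2r)\le N(T,\rho_X;r)$ holds, since you can recenter each ball of a cover of $T$ at a point of $F$ it contains. Together with $\operatorname{diam}(F,\rho_X)\le\operatorname{diam}(T,\rho_X)$, this gives the needed comparison at the cost of a factor $2$, which $\lesssim$ absorbs.
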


\begin{lemma}[Borell--TIS inequality \cite{Adler:2007:RFG}]
\label{lem:Borell} 
Let $\{X_t\}_{t\in T}$ be a centered Gaussian process, almost surely bounded on a set $T$, and define
\begin{align*}
M = \mathbb{E}\big(\sup_{t\in T} X_t\big)\quad\mbox{and}\quad \sigma_t^2 = \mathbb{E}(X_t^2).
\end{align*}
It holds that $M<\infty$ and, for all $u>M$, 
\begin{align*}
P\left(\sup_{t\in T} |X_t| > u \right) 
\leq 2\, \exp\left(-\frac{(u-M)^2}{2\sup_{t\in T}\sigma_t^2} \right).
\end{align*}
\end{lemma}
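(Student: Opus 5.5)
Since this is a classical result, I would follow the standard Tsirelson--Ibragimov--Sudakov route rather than anything specific to the present paper.

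\textbf{Finite index set.} I would first prove the bound for finite $T=\{t_1,\dots,t_n\}$ and then pass to general $T$. For finite $T$, the centered Gaussian vector $(X_{t_1},\dots,X_{t_n})$ can be written as $X=Ag$. Here $g\sim\mathcal{N}(0,I_n)$ is standard Gaussian and $A=\Sigma^{1/2}$ with $\Sigma$ the covariance matrix. Consider the map $G(g):=\max_{j}(Ag)_j$. It is Lipschitz on $\RR^n$ with constant $L=\max_j\|A_{j,\cdot}\|_2=\max_j\sigma_{t_j}\le \sup_{t}\sigma_t$, since a maximum of linear forms is Lipschitz with constant equal to the largest gradient norm.

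\textbf{Concentration.} The key input is the Gaussian concentration inequality for Lipschitz functions:
\[
P\big(G(g)-\mathbb{E}G(g)>s\big)\le \exp\!\big(-s^2/(2L^2)\big),\qquad s>0.
\]
I would prove it via the Gaussian log-Sobolev inequality combined with Herbst's argument. That is, set $\Lambda(\lambda)=\log\mathbb{E}e^{\lambda(G-\mathbb{E}G)}$, use log-Sobolev to obtain the differential inequality $(\Lambda(\lambda)/\lambda)'\le L^2/2$, deduce $\Lambda(\lambda)\le\lambda^2L^2/2$, and finish with Chebyshev's exponential inequality. For a quick non-sharp version one could instead use the Maurey--Pisier interpolation $g_\theta=g\sin\theta+g'\cos\theta$, but that worsens the constant in the exponent. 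To obtain the exact form stated I would therefore commit to log-Sobolev (or alternatively Gaussian isoperimetry).

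Setting $s=u-M$ gives $P(\sup_t X_t>u)\le\exp(-(u-M)^2/(2\sup_t\sigma_t^2))$ for $u>M$. For absolute values, note that $\sup_t|X_t|=\max(\sup_t X_t,\sup_t(-X_t))$, and that $-X$ is again a centered Gaussian process with the same law. In particular $\mathbb{E}\sup_t(-X_t)=M$. A union bound then produces the factor $2$.

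\textbf{Passage to general $T$.} I would assume $X$ is separable, which is the implicit setting in which $\sup_t X_t$ is measurable. Then $\sup_T X_t=\lim_n\sup_{T_n}X_t$ along an increasing sequence of finite sets $T_n$. Monotone convergence gives $\mathbb{E}\sup_{T_n}X_t\uparrow M$, and the tail bounds pass to the limit because the events increase.

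\textbf{Main obstacle: $M<\infty$.} The step I expect to be hardest is showing $M<\infty$, which is only assumed through almost-sure boundedness. I would handle it with the median version of concentration, which follows equally from Gaussian isoperimetry: $P(G>\mathrm{med}(G)+s)\le e^{-s^2/(2L^2)}$. Almost-sure boundedness yields a finite number $m_0$ with $P(\sup_T X_t\le m_0)\ge 1/2$, so the medians of $\sup_{T_n}X_t$ are uniformly bounded by $m_0$. In addition, $\sup_t\sigma_t<\infty$, since otherwise some single $X_t$ would have arbitrarily spread Gaussian law, contradicting $P(|X_t|\le m_0')\ge1/2$ uniformly in $t$. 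The resulting uniform Gaussian tails above $m_0$ give $\sup_n\mathbb{E}\sup_{T_n}X_t<\infty$, hence $M<\infty$. This is essentially Fernique/Landau--Shepp integrability, and it must be in place before the finite-set inequality can be taken to the limit.
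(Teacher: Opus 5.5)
The paper gives no proof of this lemma; it is quoted from Adler--Taylor, so the relevant comparison is with the standard proof in that source. Your argument is correct and has the same skeleton as that proof:
\begin{itemize}
\item reduce to finite $T$;
\item view the maximum as a $\max_j\sigma_{t_j}$-Lipschitz function of a standard Gaussian vector;
\item obtain the sharp bound $\log\mathbb{E}e^{\lambda(G-\mathbb{E}G)}\le\lambda^2L^2/2$;
\item use the symmetry $-X\overset{d}{=}X$ for the factor $2$;
\item pass to the limit along finite sets $T_n$ increasing to a countable separant. The finite bounds transfer because $M_n\le M$ and $\max_{T_n}\sigma_t\le\sup_T\sigma_t$.
\end{itemize}

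The genuine difference is the tool used for the finite-dimensional bound. Adler--Taylor use the covariance interpolation identity $\mathrm{Cov}(f(X),g(X))=\int_0^1\mathbb{E}\langle\nabla f(X),\nabla g(\alpha X+\sqrt{1-\alpha^2}\,Y)\rangle\,d\alpha$, with $Y$ an independent copy of $X$, applied to $g=e^{\lambda f}$. Since $\alpha X+\sqrt{1-\alpha^2}\,Y\overset{d}{=}X$, this yields $\frac{d}{d\lambda}\log\mathbb{E}e^{\lambda(f-\mathbb{E}f)}\le\lambda L^2$ in one line. Your log-Sobolev route reaches the same sharp constant through the entropy inequality and Herbst's differential inequality.

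Your finiteness step also needs neither the median form nor isoperimetry. Apply the mean inequality to $-G$ and choose $m_0$ with $P(\sup_T X_t\le m_0)>1/2$. This forces $M_n\le m_0+\bar\sigma\sqrt{2\log 2}$ for every $n$, where $\bar\sigma:=\sup_T\sigma_t<\infty$ as you showed. The whole proof then runs on a single concentration inequality, and you can stay entirely within the log-Sobolev framework.
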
 

With these lemmas at hand, we now present the main theorem in the single-observation setting. The result holds for an arbitrary choice of the prior mean function.
    
\begin{theorem} \label{lem:consistency} 
Let $f$ be the true objective function defined by \eqref{def:bo:objective} for sufficiently small $M_{\bm{\theta}}$. We assume a Gaussian process prior $GP(m,k)$ on $f$, where $m$ is any mean function and $k$ is the covariance kernel defined by \eqref{eqn:bo:GP:covariance}. Given the observation data $\mathcal{D}_{1}=\{(\bm{\theta}_1,y_1)\}$ with measurement model
\begin{align*}
        y_1 = f(\bm{\theta}_1) + \eps_1, \quad \text{ with } \eps_1 \sim \mu_{\eps} = \mathcal{N}(0,\nu),
\end{align*}
let $\hat{f} = m_{\text{post}}$ denote the posterior mean estimator obtained from this Bayesian model.
For a rate function $\eta(\nu) = \nu^\alpha$ with $0<\alpha<1$, it holds that 
\begin{align} \label{eqn:bo:consistency} 
P_{\text{post}}\left(\left\|\hat{f}-f\right\|_\infty > \eta(\nu) \right) \to 0 \quad \mu_{\eps}\text{-a.e.} 
\end{align}
as $\nu\to 0$,
where $P_{\text{post}}$ is the posterior probability measure given $\mathcal{D}_{1}$.
\end{theorem}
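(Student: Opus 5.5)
The plan is to identify the posterior law of $f$ given $\mathcal{D}_1$ explicitly and reduce \eqref{eqn:bo:consistency} to a Gaussian tail estimate for the centered process $g-\hat f$. Here $g\sim GP(m_{\text{post}},k_{\text{post}})$, with $m_{\text{post}}$ and $k_{\text{post}}$ given by the single-observation formulas \eqref{eqn:bo:posterior:mean}--\eqref{eqn:bo:posterior:covariance} with $\hat{\bm\theta}$ replaced by $\bm\theta_1$. These formulas hold for an arbitrary prior mean $m$, since $m$ only shifts the posterior mean. The process $X_{\bm\theta}:=g(\bm\theta)-\hat f(\bm\theta)$ is then centered Gaussian on $T=\{\|\bm\theta\|\le M_{\bm\theta}\}$, with covariance $k_{\text{post}}$ independent of $y_1$ and hence of $\eps_1$. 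Consequently the posterior probability in \eqref{eqn:bo:consistency} equals $P(\sup_T|X_{\bm\theta}|>\eta(\nu))$, a deterministic function of $\nu$. The statement is therefore proved once this quantity is shown to tend to zero. The qualifier $\mu_\eps$-a.e.\ then holds trivially, because the bound is uniform in the realization $\eps_1$.

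Step 1 bounds the pointwise variance. From \eqref{eqn:bo:posterior:variance} and $1-e^{-x}\le x$,
\[
\sigma_{\bm\theta}^2=1-\frac{e^{-2\|\bm\theta-\bm\theta_1\|/l}}{1+\nu^2}\le \frac{\nu^2}{1+\nu^2}+\frac{2\|\bm\theta-\bm\theta_1\|}{l}\le \nu^2+\frac{4M_{\bm\theta}}{l},
\]
so $\bar\sigma^2:=\sup_T\sigma_{\bm\theta}^2\le \nu^2+4M_{\bm\theta}/l$.

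Step 2 bounds the expected supremum. For the canonical pseudo-metric $\rho_X(\bm\theta,\bm\theta')^2=\mathbb{E}(X_{\bm\theta}-X_{\bm\theta'})^2\le 2(1-e^{-\|\bm\theta-\bm\theta'\|/l})\le 2\|\bm\theta-\bm\theta'\|/l$, a Euclidean $r$-net of the ball $T\subset\RR^3$ is a $\rho_X$-net at scale $\sqrt{2r/l}$. This gives $\log N(T,\rho_X;s)\lesssim 3\log(1+M_{\bm\theta}/(ls^2))$ and $\operatorname{diam}(T,\rho_X)\lesssim\bar\sigma$. Lemma~\ref{lem:Dudley} then yields $M=\mathbb{E}\sup_T X\lesssim \bar\sigma\sqrt{\log(1+M_{\bm\theta}/(l\bar\sigma^2))}+\bar\sigma$, after a routine change of variables in the entropy integral.

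Step 3 applies Lemma~\ref{lem:Borell} with $u=\eta(\nu)=\nu^\alpha$:
\[
P_{\text{post}}\big(\|g-\hat f\|_\infty>\nu^\alpha\big)\le 2\exp\!\Big(-\frac{(\nu^\alpha-M)^2}{2\bar\sigma^2}\Big).
\]
The right-hand side tends to $0$ provided $M=o(\nu^\alpha)$ and $\bar\sigma^2=o(\nu^{2\alpha})$. The noise part of $\bar\sigma^2$ is $\nu^2=o(\nu^{2\alpha})$ precisely because $\alpha<1$; this is where the rate restriction $0<\alpha<1$ enters. The same power count, with the logarithm absorbed by $\alpha<1$, gives $M=o(\nu^\alpha)$ from the noise part.

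\textbf{The main obstacle is the geometric part $4M_{\bm\theta}/l$ of $\bar\sigma^2$,} which does not decay as $\nu\to0$ for fixed $M_{\bm\theta}$. With the exponential kernel, the posterior variance at distance $r$ from $\bm\theta_1$ is of order $2r/l$ irrespective of the noise. I would first try to control this term by using the hypothesis that $M_{\bm\theta}$ is sufficiently small. Concretely, I would fix $M_{\bm\theta}$ and track the Borell--TIS exponent $\nu^{2\alpha}/(\nu^2+4M_{\bm\theta}/l)$ precisely, to see whether the smallness of $M_{\bm\theta}$ relative to $l$, together with the Lipschitz structure from Theorem~\ref{thm:angle_stability}, suffices to drive the bound to zero. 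If it does not, this is the precise point at which the literal statement needs an additional coupling between $M_{\bm\theta}/l$ and $\nu$, for instance $M_{\bm\theta}/l=o(\nu^{2\alpha})$ along the limit. I would make that coupling explicit rather than hide it; every other step above is uniform in $\eps_1$ and goes through as written.
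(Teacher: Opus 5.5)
Your Steps~1--3 follow the paper's own route: centre the posterior process, $X_{\bm\theta}=\hat f(\bm\theta)-f(\bm\theta)\sim GP(0,k_{\text{post}})$, bound $\mathbb{E}\sup X$ by Dudley, and conclude with Borell--TIS. You are right that the $\mu_\eps$-a.e.\ qualifier is vacuous, since the law of $X$ depends neither on $y_1$ nor on $f$. Your entropy bound also keeps the $M_{\bm\theta}$-dependence of the covering numbers that the paper's $N\lesssim r^{-6}$ hides. But you leave the decisive step open, and it cannot be closed under the stated hypotheses. The term $4M_{\bm\theta}/l$ is not a technicality; it gives a counterexample to the statement as written. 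Fix any $M_{\bm\theta}>0$, however small, and choose $\bm\theta_0\in\bm\Theta$ with $\|\bm\theta_0-\bm\theta_1\|=r_0>0$. Then $\operatorname{Var}(X_{\bm\theta_0})=1-e^{-2r_0/l}/(1+\nu^2)\ge c_0:=1-e^{-2r_0/l}>0$, hence
\[
P_{\text{post}}\big(\|\hat f-f\|_\infty>\nu^\alpha\big)\;\ge\;P\big(|Z|>\nu^\alpha/\sqrt{c_0}\big)\;\longrightarrow\;1,\qquad Z\sim\mathcal{N}(0,1),
\]
for every realization of $\eps_1$. So a fixed small $M_{\bm\theta}$ cannot rescue the argument. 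Neither can the Lipschitz estimate of Theorem~\ref{thm:angle_stability}, because it concerns the true objective, which does not enter the posterior law of $X$. You should state this outright instead of hedging.

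The paper's proof fails at exactly this step. After the Dudley bound it asserts, ``assuming the smallness of $M_{\bm\theta}$'', that $\sigma_{\text{post}}(\bm\theta)=\sqrt{(\nu^2-2\|\bm\theta-\hat{\bm\theta}\|/l)/(1+\nu^2)}=O(\nu)$ uniformly on $\bm\Theta$. The sign in front of $2\|\bm\theta-\hat{\bm\theta}\|/l$ should be $+$ (to leading order, as in your Step~1). With the correct sign, $O(\nu)$ requires $\sup_{\bm\Theta}\|\bm\theta-\hat{\bm\theta}\|=O(\nu^2)$, i.e.\ $M_{\bm\theta}$ must shrink with $\nu$. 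So the paper tacitly uses the kind of coupling you propose.

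Your explicit condition $M_{\bm\theta}/l=o(\nu^{2\alpha})$ is the right repair, and it is weaker than the paper's implicit $M_{\bm\theta}=O(\nu^2)$. With it your argument is complete:
\begin{itemize}
\item Set $S:=\nu^2+4M_{\bm\theta}/l$, so $\bar\sigma^2\le S=o(\nu^{2\alpha})$.
\item Substituting $s=\sqrt{M_{\bm\theta}/l}\,t$ in the entropy integral gives $M\lesssim\sqrt S$, because $S\ge 4M_{\bm\theta}/l$ and $(1+\log x)/x$ is bounded on $x\ge1$.
\item Hence $M=o(\nu^\alpha)$, and the Borell--TIS exponent $(\nu^\alpha-M)^2/(2\bar\sigma^2)$ tends to infinity.
\end{itemize}
All of this uses the paper's convention that the noise enters the posterior with variance $\nu^2$. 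If $\mathcal{N}(0,\nu)$ in the statement is read literally as variance $\nu$, then already $\sigma_{\text{post}}^2(\bm\theta_1)=\nu/(1+\nu)$, and the conclusion requires $\alpha<1/2$ rather than $\alpha<1$.
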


\begin{proof}
Assuming the Gaussian process prior $GP(m,k)$ on $f$, the posterior distribution given $\mathcal{D}_{1}$ is also a Gaussian process with a mean function $m_\text{post}$ and a covariance kernel $k_\text{post}$, where $k_\text{post}$ is explicitly given by \eqref{eqn:bo:posterior:covariance}.

Set $\bm{\Theta}= \{\bm{\theta}:\|\bm{\theta}\| < M_{\bm{\theta}}\}$. We define
\begin{align}\label{def:X_theta}
X_{\bm \theta} := m_\text{post}(\bm{\theta}) - f(\bm{\theta}) \quad\mbox{for }\bm{\theta} \in \bm{\Theta}
\end{align}
and denote by $\{X_{\bm \theta}\}_{\bm{\theta} \in \bm{\Theta}}$ the posterior process given $\mathcal{D}_1$, which is the centered Gaussian process with covariance kernel
\(k_{\mathrm{post}}\), i.e.,
\[
\{X_{\bm{\theta}}\}_{\bm{\theta}\in\bm{\Theta}}
\sim GP(0,k_{\mathrm{post}}).
\]

Define the pseudo-metric $\rho_X$ on $\bm{\Theta}$ associated with the process $X_{\bm \theta}$ by
\begin{align*}
        \rho_X(\bm{\theta},\bm{\theta}') := \sqrt{\mathbb{E}(X_{\bm \theta}-X_{\bm \theta'})^2}.
\end{align*}
From \eqref{eqn:bo:posterior:covariance} and \eqnref{eqn:bo:posterior:variance}, it follows that 
 \begin{align*}
        \rho_X(\bm{\theta},\bm{\theta}') &= \sqrt{k_{\text{post}}({\bm \theta},{\bm \theta}) + k_{\text{post}}({\bm \theta}',{\bm \theta}') - 2k_{\text{post}}({\bm \theta},{\bm \theta}')}\\
        &= \sqrt{2 - 2k({\bm \theta},{\bm \theta}') - \frac{\big(k({\bm \theta},\hat{{\bm \theta}}) - k({\bm \theta}',\hat{{\bm \theta}}) \big)^2}{1+\nu^2}}.
\end{align*}
Observe that $\big(k({\bm \theta},\hat{{\bm \theta}}) - k({\bm \theta}',\hat{{\bm \theta}}) \big)^2 = O(\|\bm{\theta} -\bm{\theta}'\|^2)$ by \eqref{eqn:bo:GP:covariance}.
For $\bm{\theta},\bm{\theta}' \in \bm{\Theta}$ with $M_{\bm{\theta}}$ sufficiently small, we have   
 \begin{align*}
        k(\bm{\theta}, \bm{\theta}') = 1-\frac{\|\bm{\theta} -\bm{\theta}'\|}{l} + O(\|\bm{\theta} -\bm{\theta}'\|^2).
\end{align*}
It follows that
    \begin{align} \label{asymp:bo:rho_X}  
        \rho_X(\bm{\theta},\bm{\theta}') \asymp \|\bm{\theta} -\bm{\theta}'\|^{\frac{1}{2}}. 
\end{align}
    
Now, we denote $$M_\nu := \mathbb{E}\big(\sup_{\bm{\theta} \in \bm{\Theta}} X_{\bm\theta}\big)$$
and apply Lemma \ref{lem:Dudley} (Dudley's entropy theorem) to observe
\begin{align} \label{asymp:bo:M_nu:pre}
        M_\nu \lesssim \int_0^{\rm{diam}({\bm\Theta},\rho_X)} 
        \sqrt{\log N(\bm{\Theta}, \rho_X; r)} \,dr \quad \mu_{\eps}\text{-a.e.},
\end{align} 
where \(\operatorname{diam}(\bm{\Theta},\rho_X)\) and \(N(\bm{\Theta},\rho_X;r)\) denote, respectively, the diameter and the covering number of \(\bm{\Theta}\) with respect to \(\rho_X\) (see Lemma \ref{lem:Dudley}). 
Since \(\bm{\Theta}\) is bounded, we have 
\begin{align*}
\text{diam}({\bm\Theta},\rho_X) \leq 2\sup_{\bm{\theta} \in \bm{\Theta}} \sigma_{\text{post}}(\bm{\theta}), 
\end{align*}
and, from \eqref{asymp:bo:rho_X},
\begin{align*}
    N(\bm{\Theta}, \rho_X; r) \lesssim \left(\frac{1}{r^2}\right)^3 = r^{-6}.
\end{align*}
Hence, \eqref{asymp:bo:M_nu:pre} results in
\begin{align} \label{asymp:bo:M_nu}
        M_\nu \lesssim \int_0^{2\sup \sigma_{\text{post}}(\bm{\theta})} 
        \sqrt{\log r^{-1}} \,dr \lesssim \sup \sigma_{\text{post}}  \quad \mu_{\eps}\text{-a.e.} 
\end{align}
Additionally, assuming the smallness of $M_{\bm{\theta}}$,  $\bm{\theta}$ concentrates near $\hat{\bm{\theta}}$ within $\bm{\Theta}$ and \eqref{eqn:bo:posterior:variance} implies  
    \begin{align*}
        \sigma_{\text{post}}(\bm{\theta}) = \sqrt{\frac{\nu^2-2\|\bm{\theta}-\hat{\bm{\theta}}\|/l}{1+\nu^2}} = O(\nu) 
    \end{align*}
as $\nu \to 0$; thus, \eqref{asymp:bo:M_nu} yields
$$M_\nu = O(\nu) \quad \mu_{\eps}\text{-a.e.}$$

In particular, $M_\nu$ is almost surely finite and $\eta(\nu)>M_\nu$, and therefore we can apply Lemma \ref{lem:Borell} (Borell--TIS inequality) to obtain 
\begin{align*}
P\left(\sup_{\bm{\theta} \in \bm{\Theta}} |X_{\bm\theta}| > \eta(\nu) \right) 
& \leq 2\exp\left(-\frac{(\eta(\nu)-M_\nu)^2}{2\sup \sigma_{\text{post}}^2} \right)  \quad \mu_{\eps}\text{-a.e.} 
\end{align*} 
Hence, by using the asymptotic relations together with $\alpha<1$, we conclude that for some constant $c>0$,
\begin{align*}
P\left(\left\|X_{\bm \theta}\right\|_\infty > \eta(\nu) \right) 
& \leq 2\,\exp\left(-c \nu^{2\alpha-2}  \right) \to 0 
\end{align*}
as $\nu \to 0$, where the convergence holds for almost every noise vector drawn from \(\mathcal{N}(0,\nu)\).
In view of \eqnref{def:X_theta}, this proves \eqref{eqn:bo:consistency}.
\end{proof}
        
\begin{remark}
The proof above also applies to other kinds of covariance kernel besides the unsquared exponential kernel. For example, one may use $k_l$ as a Mat\'ern covariance kernel with a half-integer parameter $p+1/2$ ($p=0,1,2$) or the squared exponential kernel, both frequently used as covariance kernels. Then the exponent in \eqref{asymp:bo:rho_X} lies between $1/2$ and $3/2$, yielding the same asymptotic inequality as \eqref{asymp:bo:M_nu}.   
\end{remark}

\section{Full framework for tracking a moving target}\label{sec:framework}

This section introduces a unified framework for identifying the shape and tracking a rigidly moving three-dimensional scatterer. At the initial time ($t=0$), if the shape of the scatterer is unknown, it (more precisely, the boundary surface of the scatterer) is reconstructed from measurement data using a neural network. Once the initial shape has been identified, the position and rotation angle of the scatterer are tracked for $t>0$ by Bayesian optimization. Since the motion is rigid, the reconstructed shape together with its evolving location and orientation completely determines the scatterer. The initial shape identification procedure is described in Section \ref{subsec:framework:shape}, and the subsequent tracking method in Section \ref{subsec:framework:method}. An overview of the tracking framework is provided in Section \ref{subsec:framework:compre}.

 \subsection{Deep learning for shape identification} \label{subsec:framework:shape}
 
We use a fully connected neural network (FCNN) for the shape identification. Let $$\Phi:\R^{n_0}\to\R^{n_L}$$ denote an FCNN with $L+1$ layers, where $L\geq 2$. We set $L=5$ and train $\Phi$ to approximate a target map $\Psi$ from far-field measurements to a parameterization of the boundary surface $\p\Omega$. Here, the input dimension $n_0$ is the number of real-valued components of the far-field data, while the output dimension $n_L$ is the number of parameters describing $\p\Om$. The precise structure of these boundary parameters is given in Section \ref{subsec:num:modeling:ellipsoid}. The network parameters are learned by minimizing a discrepancy-based loss between the outputs of $\Phi$ and those of $\Psi$.

The feasibility of reconstructing $\p\Omega$ from far-field data using FCNNs was established in \cite{Gao:2022:ANN}. Following our previous work \cite{Lee:2025:BOA}, we adopt this approach as a component of the proposed tracking framework. In practice, accurate reconstructions can be obtained using a relatively simple FCNN architecture, provided that the network depth and training parameters are appropriately selected. 

Since the far-field data are complex-valued, we construct $\Phi$ as a real-valued network by concatenating the real and imaginary parts of the measurements. Accordingly, $n_0$ is twice the number of complex-valued input measurements. The layer widths are chosen to be $n_0$, $2n_0$, $3n_0$, $5n_5$, $2n_5$, and $n_5$, where $n_L=n_5$. We employ the SELU activation function to promote self-normalization \cite{Klambauer:2017:SNN}, together with LeCun normal initialization to facilitate stable training. We remark that the choice of activation function and initialization can substantially affect the learning efficiency.

The loss function for $\Phi$ is defined as a weighted $l^2$-loss measuring the discrepancy between the true shape parameters and the real-valued network output. We assign a weight of $2$ to each axis parameter and $3$ to the zeroth-order perturbation parameter $f^1_0$, while assigning weight $1$ to all remaining parameters; see \eqref{eqn:num:bd} and \eqref{eqn:num:bd:f} in Section \ref{subsec:num:modeling:ellipsoid}. These weights reflect their relative importance of the corresponding parameters in characterizing the shape, as well as the greater difficulty of learning $f^1_0$.
Figure \ref{fig:framework:comp}(b) schematically illustrates the overall architecture of \(\Phi\).

The network is trained using the AdamW optimizer with a learning rate of $10^{-4}$ and a mini-batch size of $128$. AdamW is chosen to mitigate the unintended scaling effects of Adam’s adaptive updates and is empirically better suited for SELU-based networks. Training is performed over $5{,}000$ epochs on $20{,}000$ samples, with $16{,}000$ used for training and $4{,}000$ reserved for validation. The dataset is obtained by solving the forward scattering problem \eqref{prob:scattering} with \eqref{eqn:density} and \eqref{eqn:ff:orig} for randomly shaped targets at the initial time, using the shape parameterization described in Section \ref{subsec:num:modeling:ellipsoid}. 
Throughout the tracking procedure presented in Section \ref{subsec:framework:compre}, the network $\Phi$ is assumed to be pretrained offline.

\begin{remark}
Although the overall deep learning architecture follows that of the two-dimensional setting \cite{Lee:2025:BOA}, several modifications are introduced to accommodate the three-dimensional shape parameterization. In particular, the target shape is modeled as a perturbed ellipsoid, whereas in the two-dimensional setting it is modeled as a perturbed ellipse. This change in shape representation also leads to a different weighting strategy in the identification task. Specifically, the semi-axes length parameters of the reference ellipsoid, namely the parameters $a$, $b$, and $c$ in \eqref{eqn:num:bd}, are weighted more heavily than the corresponding parameters in the two-dimensional case, reflecting their increased importance for accurate identification.
\end{remark}

 \subsection{Tracking rigid motion via Bayesian optimization} \label{subsec:framework:method}

We formulate a Bayesian optimization-based tracking framework for recovering the motion of a scatterer from far-field measurements, building on the approach described in Section~\ref{subsec:bo:angle}. Let $\Omega$ denote the scatterer at the initial time, i.e., $\Omega_0=\Omega$. At time $t$, the scatterer $\Omega_t$ is assumed to undergo a rigid motion characterized by a rotation parameter vector $\bm{\theta}_t$ and a translation vector $\tau_t$, so that
\begin{equation}\label{Om_t:def}
\Omega_t = R_{\bm{\theta}_t}\Omega + \tau_t.
 \end{equation}

In three dimensions, rotations are non-commutative, and hence successive rotations specified by multiple angles cannot, in general, be reduced to a single rotation by simply adding the angles. Accordingly, we represent three-dimensional rotations as sequential products of rotation matrices corresponding to roll, pitch, and yaw, in order, and retain the same convention under successive rotations through matrix multiplication. Specifically, a rotation $(\alpha_1,\beta_1,\gamma_1)$ followed by a rotation $(\alpha_2,\beta_2,\gamma_2)$ yields a resultant rotation $(\alpha,\beta,\gamma)$, where $(\alpha,\beta,\gamma)$ is defined by 
\begin{align*} 
\begin{cases}
        \alpha = \operatorname{atan2}\left({R_{3,2}},{R_{3,3}}\right)\\
        \beta = -\operatorname{atan2}\left({R_{3,1}},{\sqrt{R_{1,1}^2+R_{2,1}^2}} \right)\\
        \gamma = \operatorname{atan2}\left({R_{2,1}},{R_{1,1}}\right)
\end{cases}\quad \text{ for } 
 R= \left(R_{\gamma_2}^{\hat{e}_3} R_{\beta_2}^{\hat{e}_2} R_{\alpha_2}^{\hat{e}_1}\right) \left(R_{\gamma_1}^{\hat{e}_3} R_{\beta_1}^{\hat{e}_2} R_{\alpha_1}^{\hat{e}_1}\right),
\end{align*}
provided that $|R_{3,1}| \neq 1$, which excludes the gimbal lock case; see \eqref{eqn:mat2rpy}. 
This formulation naturally captures the sequential rigid motion of the target.

We introduce a discrete-time formulation by sampling time at $t=n\delta$, where $n\in\Z_{\geq 0}$ and $\delta>0$ is a fixed time increment.  
For convenience, we write
\begin{equation}\label{Omega_n:def}
    \Omega_n := \Omega_{n\delta}, \quad \tau_n := \tau_{n\delta}, \quad \bm{\theta}_n := \bm{\theta}_{n\delta},
\end{equation}
and impose the initial conditions $\tau_0=(0,0,0)$ and $\bm{\theta}_0=(0,0,0)$.  
The time increment $\delta$ is assumed to be small, and the admissible ranges of $\tau_n$ and $\bm{\theta}_n$ are taken to be bounded. These constraints reduce the effective search space at each time step, thereby improving the efficiency of the Bayesian optimization procedure.

Unlike in the two-dimensional case, where the observation directions lie on the unit circle $S^1$, the far-field data in three dimensions depend on observation directions on the unit sphere $S^2$.
At each discrete time $t=n\delta$, we assume that the measurements are available at finitely many observation directions $\hat{x}$ on $S^2$. In particular, we suppose that the far-field pattern of $\Om_n$ is sampled at
\begin{equation} \label{def:framework:view}
    \hat{x}_{l,m}=\left(\cos\phi_l\cos\psi_m,\,\sin\phi_l\cos\psi_m,\,\sin\psi_m\right), 
\end{equation}
where 
\begin{equation*}
        \phi_l = \frac{l\pi }{3}, \ l=0,\ldots,5 \quad\mbox{and}\quad
        \psi_m = \frac{m\pi}{3},\  m=-1,0,1. 
\end{equation*}
This yields a total of $18$ measurement points. The resulting far-field data are then assembled into a matrix and represented as
\begin{equation} 
    	u^\infty_n := \begin{pmatrix}
    	u^\infty_{\Omega_{n}}(\hat{x}_{0,-1}) & u^\infty_{\Omega_{n}}(\hat{x}_{0,0}) &u^\infty_{\Omega_{n}}(\hat{x}_{0,1})\\
    	\vdots& \vdots & \vdots\\
    	u^\infty_{\Omega_{n}}(\hat{x}_{5,-1}) & u^\infty_{\Omega_{n}}(\hat{x}_{5,0}) & u^\infty_{\Omega_{n}}(\hat{x}_{5,1})
    	\end{pmatrix} + \eps^{\text{meas}}_n,   \label{def:framework:ff}
\end{equation} 
where $\eps^{\text{meas}}_n$ denotes the measurement noise at time $t=n\delta$.

Given an estimate $(\tau_n,\bm{\theta}_n)$ at time step $n$ and the newly acquired far-field data $u_{n+1}^\infty$, the state at the next time step is determined via the Bayesian optimization procedure presented in Section~\ref{subsec:bo:angle}. Specifically, the optimization is performed in a neighborhood of $(\tau_n,\bm{\theta}_n)$ by minimizing the objective function
\begin{align}\label{eqn:framework:objective}
    f_n(\bm{\theta}) = \min_{\|\tau-\tau_n\| < M_\tau} \left\|u^\infty_{R_{\bm{\theta}}\Omega_0+\tau}-u^\infty_{n+1}\right\|_2, 
    \qquad \|\bm{\theta}-\bm{\theta}_n\| < M_{\bm{\theta}},
\end{align}
where $M_\tau>0$ and $M_{\bm{\theta}}>0$ specify the admissible search bounds (see also \eqref{def:bo:objective} and \eqref{eqn:bo:translated}). Since the far-field data are acquired in the matrix form, the norm $\|\cdot\|_{2}$ denotes the matrix $2$-norm, i.e., the spectral norm, in contrast to the $L^2(S^2)$-norm used in \eqref{def:bo:objective}.

We perform Bayesian optimization using the software module provided in \cite{Nogueira:2014:BOO} under the MIT License, with modifications to accommodate nonzero prior mean functions. In addition, we approximate the derivative term in the prior mean function by forward differences, which converge to the true derivative at differentiable points. 

At each time step, the Bayesian optimization proceeds with ten function evaluations. The procedure is initialized with four informed candidates: the previous orientation estimate $\bm{\theta}_n$ and three candidate orientations for the $(n+1)$-st step. Specifically, the three candidates are chosen as the smallest minimizers defined in \eqref{def:bo:thetahat}, and their average is used as $\hat{\bm{\theta}}$ to mitigate the effect of noise. 
The combination of $\hat{\bm{\theta}}$ and the previous estimate $\bm{\theta}_n$ provides reliable starting points for the optimization, yielding substantially better performance than random initialization. 
The updated rotation $\bm{\theta}_{n+1}$ is then selected as the minimizer of $f_n(\bm{\theta})$ obtained from Bayesian optimization. Accordingly, the translation $\tau_{n+1}$ is chosen as the corresponding minimizer with respect to $\tau$ in \eqref{eqn:framework:objective} at $\bm{\theta} = \bm{\theta}_{n+1}$, that is,
\[
    f_n(\bm{\theta}_{n+1})
    =
    \bigl\|
    u^\infty_{R_{\bm{\theta}_{n+1}}\Omega_0+\tau_{n+1}}
    -
    u^\infty_{n+1}
    \bigr\|_2.
\]
This update rule is repeated sequentially  to recover $(\tau_n,\bm{\theta}_n)$ at each discrete time $t=n\delta$.

 \subsection{Comprehensive framework for tracking a moving scatterer} \label{subsec:framework:compre}

We summarize the overall framework for tracking a moving scatterer using far-field data sampled at discrete times $t=n\delta$, $n=0,1,\ldots$.  
The scatterer $\Omega$ is assumed to be initially centered at the origin and to undergo rigid motion in three dimensions for $t>0$. 

The framework comprises three main components: shape identification (Step $0$), construction of a far-field data dictionary (Step $1$), and tracking of the target’s orientation and location (Steps $2$--$3$) through minimization of the objective function \eqref{eqn:framework:objective}.
If the target shape is known {\it a priori}, Step $0$ may be skipped. In Step $1$, the pitch angle $\beta$ is restricted to remain bounded away from $\pm\pi/2$ to avoid gimbal lock. 
The entire process is depicted in Figure \ref{fig:framework:comp}(a).

\begin{itemize}
\item \textbf{Step 0.}
Reconstruct the reference shape $\Omega_0$ from the far-field data $u_0^\infty$ at the initial time using the trained network in Section \ref{subsec:framework:shape}.

\item \textbf{Step 1.}
Given $\Omega_0$, numerically generate the set of far-field data $u^\infty_{R_{\bm{\theta}}\Omega_0}$ for rotated targets $R_{\bm{\theta}}\Omega_0$, where $\bm{\theta}=(\alpha,\beta,\gamma)$ ranges over a prescribed subset
\begin{equation*}
\bm{\Theta} \subset (-\pi,\pi]\times (-\pi,\pi]\backslash \{\pm\pi/2\} \times (-\pi,\pi].
\end{equation*}
The initial position and orientation are set to $\tau_0=(0,0,0)$ and $\bm{\theta}_0 = (0,0,0)$, respectively.

\item \textbf{Step 2.}
Given the estimate $(\tau_n,\bm{\theta}_n)$ and the far-field data $u^\infty_{n+1}$ at the $(n+1)$th time step, perform Bayesian optimization of the objective function 
\begin{align*}
f_n(\bm{\theta}) = \min_{\|\tau-\tau_n\| < M_\tau} \left\|u^\infty_{R_{\bm{\theta}}\Omega_0+\tau}-u^\infty_{n+1}\right\|_2, 
\qquad \|\bm{\theta}-\bm{\theta}_n\| < M_{\bm{\theta}},
\end{align*}
to find $\bm{\theta}_{n+1}$, as outlined in Section \ref{subsec:framework:method}.

\item \textbf{Step 3.}
Given $\bm{\theta}=\bm{\theta}_{n+1}$, determine $\tau_{n+1}$ as the minimizer of the right-hand side of $f_n(\bm{\theta})$ in Step $2$, to obtain the update $(\tau_{n+1},\bm{\theta}_{n+1})$.
Repeat Steps $2$–$3$ until the tracking procedure terminates.
\end{itemize}

\begin{figure}[H] 
    \centering
    \begin{subfloat}[Flowchart of the overall tracking procedure]
	{\includegraphics[width=0.45\textwidth]{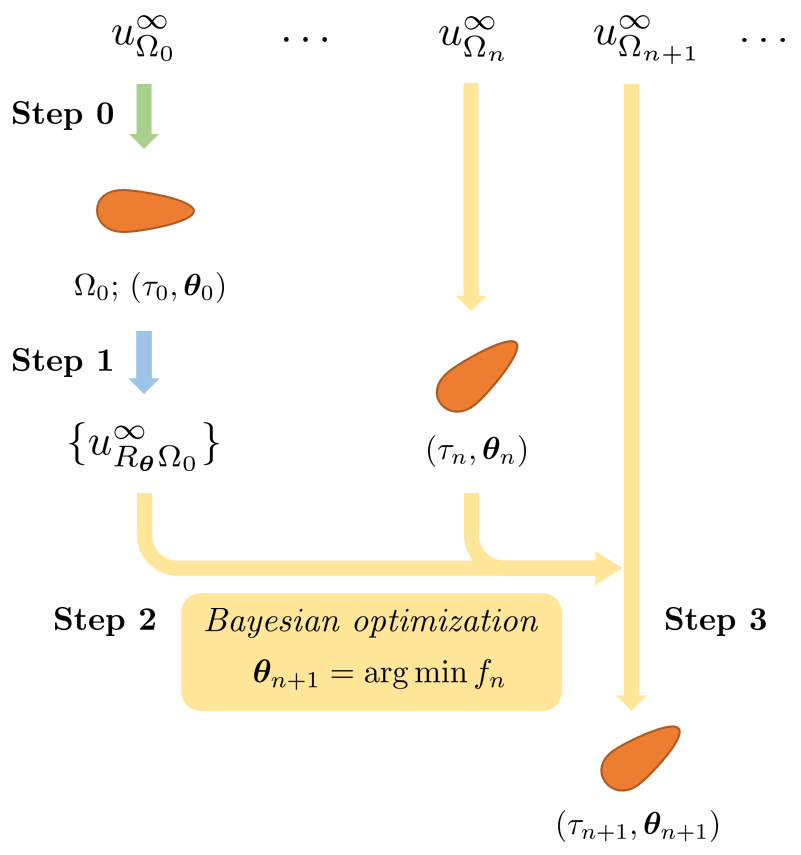}} 
    \end{subfloat} 
    \vfill
    \begin{subfloat}[Neural network architecture for shape identification in Step $0$]
	{\includegraphics[width=0.8\textwidth]{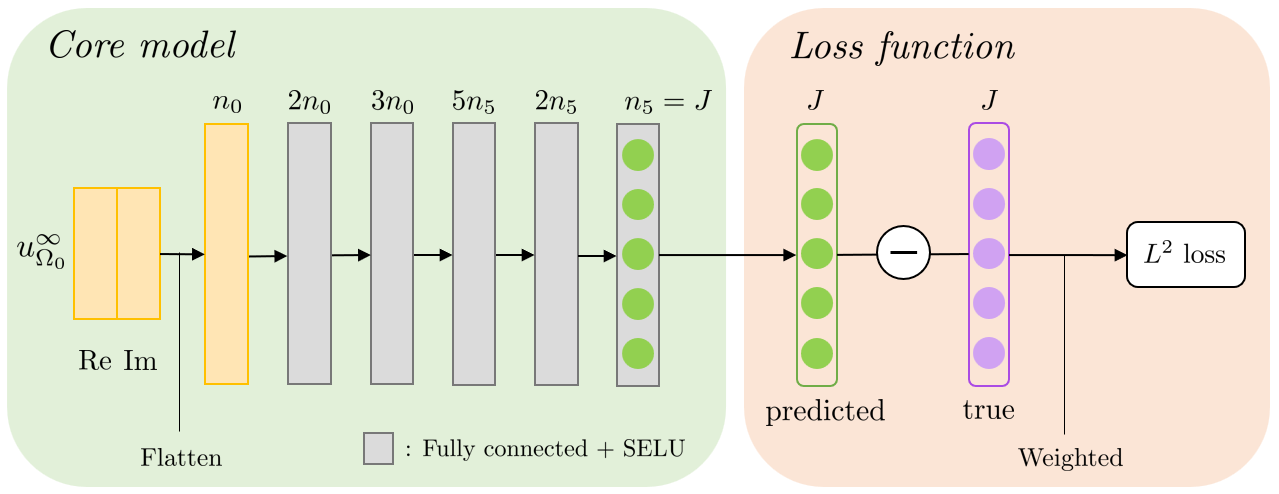}}
    \end{subfloat} 
\caption{\label{fig:framework:comp}
Overview of the proposed three-dimensional tracking framework. 
As illustrated in (a), the framework integrates deep learning-based shape identification with Bayesian optimization-based tracking of a moving scatterer. 
In the neural network architecture shown in (b), the network $\Phi$ used in Step 0 is configured with $L=5$, and $J$ denotes the number of shape parameters defining the target boundary $\partial\Omega_0$.}
\end{figure}

At each time step $t=n\delta$, the scatterer is reconstructed as
\begin{align*} 
\Omega_n  = R_{\bm{\theta}_n}\Omega_0 + \tau_n.
\end{align*} 
This algorithm is not restricted to specific target shapes and trajectories, as supported by Lemma \ref{thm:translation} and Theorem \ref{thm:angle_stability}. As pointed out in \cite{Lee:2025:BOA}, however, the error arising in the Bayesian optimization process during Step 2 may increase for large scatterers. This phenomenon occurs because the local slope of the objective function $f(\bm{\theta})$ near the true angle depends on the the size of the target, as can be inferred from the proof of Theorem \ref{thm:angle_stability}.

\section{Numerical simulations} \label{sec:num}

Following the comprehensive framework for tracking a moving scatterer described in Section~\ref{subsec:framework:compre}, we conduct numerical simulations for three-dimensional tracking. Section~\ref{subsec:num:modeling} and Section~\ref{subsec:num:setup} describe the modeling of the moving targets and the experimental setup, respectively, and Section~\ref{subsec:num:ex} presents the numerical results and discussion.

\subsection{Modeling of a moving target} \label{subsec:num:modeling}

The moving targets in the numerical examples are extended targets at wavenumber $k=1$, whose diameters are larger than $2\pi$. Their shapes are modeled as ellipsoids perturbed by ellipsoidal harmonics \cite{Dassios:2012:EHT}; see Appendix~\ref{appen:ellipsoidal}. The semi-axis lengths of the reference ellipsoid are denoted by $a$, $b$, and $c$, with $a>b>c$. 
Geometric constraints are imposed on the perturbation to ensure that the resulting boundary surface encloses the origin and remains free of self-intersections. These conditions are enforced through parameter restrictions given in Section~\ref{subsec:num:modeling:shape}. The motions of the moving targets are modeled by random processes as described in Section~\ref{subsec:num:modeling:motion}.

 \subsubsection{Perturbed ellipsoids} \label{subsec:num:modeling:ellipsoid}
We model the target at the initial time, namely, $\Omega=\Omega_0$, as an ellipsoid perturbed by ellipsoidal harmonics. A description of three-dimensional objects in terms of ellipsoidal harmonics can be found in \cite{Mademlis:2009:EH3}.
Precisely, the boundary surface $\partial\Omega \subset \mathbb{R}^3$ is parametrized by
\begin{equation} \label{eqn:num:bd}
\begin{aligned}
    \left(\phi, \psi\right) \mapsto
     &\,\left(a\cos\phi\sin\psi,\, b\sin\phi\sin\psi,\, c\cos\psi\right) \\
     &+ \eps \left(bc\cos\phi\sin\psi, \, ac\sin\phi\sin\psi, \, ab\cos\psi\right) \cdot f(\phi,\psi),
\end{aligned}
\end{equation}
where $0\leq\phi<2\pi$, $0\leq \psi\leq \pi$, and the perturbation function $f$ is given by 
\begin{align}\label{eqn:num:bd:f}
f(\phi, \psi)= \sum_{n=0}^{N-1} \sum_{m=1}^{2n+1} f_n^m \, S_n^m(\phi, \psi).
\end{align}
Here, $a$, $b$, and $c$ satisfy $a>b>c>0$ and represent the semi-axis lengths of the reference ellipsoid.  
Moreover, $f_n^m$ are real coefficients, and $S_n^m$ are Lam\'e functions; see Appendix~\ref{appen:ellipsoidal}. The indices satisfy $n=0,\ldots,N-1$ and $m=1,\ldots,2n+1$, where $N$ is a positive integer.

Thus, the target shape is characterized by the semi-axis lengths $a$, $b$, and $c$, the perturbation amplitude $\varepsilon$, the truncation parameter $N$, and the coefficients $f_n^m$. Allowing the semi-axis lengths $(a,b,c)$ to vary yields a class of perturbed ellipsoids that is more expressive than the corresponding class of perturbed spheres.

  \subsubsection{Generation of random target shapes} \label{subsec:num:modeling:shape}

For perturbed ellipsoids of the form \eqref{eqn:num:bd}, we generate admissible target shapes by imposing suitable constraints on the parameters. Without these constraints, inappropriate parameter choices may lead to self-intersecting surfaces or surfaces that do not enclose the origin.

For $N=2$, by the definition of Lam\'{e} functions and the fact that $xy+xz+yz\leq\frac{(x+y+z)^2}{3}$ for real numbers $x,y,z$, we have
\begin{align*}
f(\phi,\psi) &= f_0^1 + f_1^1 h_2h_3\cos\phi\sin\psi + f_1^2 h_1h_3\sin\phi\sin\psi + f_1^3h_1h_2\cos\psi\\
 & \leq |f_0^1| + \left((f_1^1h_2h_3)^2 + (f_1^2h_1h_3)^2 + (f_1^3h_1h_2)^2 \right)^{\frac{1}{2}}\\
& \leq \left|f_0^1\right| + \|f_1\|_\infty \cdot \frac{h_1^2+h_2^2+h_3^2}{\sqrt{3}} 
= \left|f_0^1\right| + \frac{2\|f_1\|_\infty}{\sqrt{3}}(a^2-c^2),
\end{align*}
where $h_1=\sqrt{b^2-c^2}$, $h_2=\sqrt{a^2-c^2}$, $h_3 = \sqrt{a^2-b^2}$, and $f_1=(f_1^1,f_1^2,f_1^3)$.
Since $a>b>c$, the minimum distance from the origin to the reference ellipsoid is $c$. Moreover, the magnitude of the perturbation term is bounded by $\eps ab |f(\phi,\psi)|$.
Thus, the distance from the origin to the boundary of the perturbed ellipsoid is bounded below by
\begin{align*}
    c- \eps ab\left(\left|f_0^1\right|+\frac{2\|f_1\|_\infty }{\sqrt{3}}(a^2-c^2) \right).
\end{align*}
To generate random target surfaces, we impose conditions on the coefficients so that this lower bound remains positive. Namely, we require
\begin{align} \label{ineq:shape:cond} 
    \|{f}_1\|_\infty < \frac{\sqrt{3}}{2(a^2-c^2)}\left( \frac{c}{\eps ab} - \left|f_0^1\right| \right). 
\end{align}
Under this condition, the surface defined by \eqref{eqn:num:bd} encloses the origin.
For example, when $N=2$, $a=8$, $b=5$, $c=4$ and $\eps=0.01$, the choice $|f^1_0|=1$ together with $\|{f}_1\|_{\infty } \leq 0.16$ satisfies \eqref{ineq:shape:cond}.

In the numerical experiments of Section~\ref{subsec:num:ex}, we fix $N=2$ and $\varepsilon=0.01$, and sample the coefficients $f_n^m$ subject to \eqref{ineq:shape:cond}. With a sufficiently small perturbation amplitude, this condition yields boundary surfaces enclosing the origin. 
All generated target shapes are numerically verified to be free of self-intersections.

 \subsubsection{Generation of random target motions} \label{subsec:num:modeling:motion}

We model the rigid motion of the target $\Omega_t$, $t>0$, in \eqref{Om_t:def} by representing its position $\tau_t$ and orientation angle $\bm{\theta}_t$ as three-dimensional Brownian motions. This modeling approach is motivated by the work of Ammari et al.~\cite{Ammari:2013:TMT}, where related stochastic models are used to describe target's motion in two dimensions. 

Let $W_t$ denote a standard three-dimensional Brownian motion.
For each generated motion, the translational and rotational diffusion coefficients $\sigma_v$ and $\sigma_{\bm{\theta}}$ are chosen as time-independent constants and remain fixed throughout the trajectory.

For the translational motion, we define the velocity of the target by 
\begin{align*}
v_t = v_0 + \sigma_v W_t,
\end{align*} 
where $v_0\in\R^3$ is the initial velocity and $\sigma_v>0$ denotes the standard deviation. 
Since $\tau_0=0$, the position of the target is given by
\begin{align*}
\tau_t = \int_0^t v_s ds.
\end{align*} 
For the discrete time step $\delta>0$ and $n \in \Z_{\geq 0}$, let $v_n:= v_{n\delta}$, in accordance with the notation in \eqref{Omega_n:def}. By the properties of Brownian motion, the discrete-time dynamics satisfy
\begin{align} \label{traj:dynamics:v} 
\begin{pmatrix}
v_{n+1} \\ \tau_{n+1} 
\end{pmatrix} = 
\mathbb{F} \begin{pmatrix}
v_{n} \\ \tau_n 
\end{pmatrix} + \mathbb{A}, \quad
\mathbb{F} = \begin{pmatrix}
\mathbb{I}_3 & 0 \\
\delta \mathbb{I}_3 & \mathbb{I}_3 
\end{pmatrix},
\end{align}
where $\mathbb{A}$ is a six-dimensional Gaussian random vector with mean zero and covariance matrix
\begin{align*} 
\Sigma 
= \begin{pmatrix}
\sigma_v^2\,\delta\,\mathbb{I}_3 & \frac{1}{2}\,\sigma_v^2\,\delta^2\,\mathbb{I}_3 \\[2mm]
\frac{1}{2}\,\sigma_v^2\,\delta^2\,\mathbb{I}_3 & \frac{1}{3}\,\sigma_v^2\,\delta^3\,\mathbb{I}_3 \end{pmatrix}. 
\end{align*}

For the rotational motion, we let the orientation evolve according to the Stratonovich stochastic differential equation
\begin{align*} 
dR_{\bm{\theta}_t} =  \widehat{\sigma_{\bm{\theta}} \circ dW_t} R_{\bm{\theta}_t}, 
\end{align*}
where the hat map $\hat{\cdot} :\R^3 \to \mathfrak{so}(3)$ maps a vector in $\RR^3$ to a $3\times 3$ skew-symmetric matrix and $\sigma_{\bm{\theta}}>0$ denotes the standard deviation. The initial orientation angle is denoted by $\bm{\theta}_0\in(-\pi,\pi]^3$ and the corresponding rotation matrix is $R_{\bm{\theta}_0}\in SO(3)$. For the discrete time step $\delta>0$ and $n\in \Z_{\geq 0}$, let $\bm{\theta}_n:= \bm{\theta}_{n\delta}$. Then the orientation is updated according to
\begin{align} \label{traj:theta} 
R_{\bm{\theta}_{n+1}} =  R_{\bm{\rho}}  R_{\bm{\theta}_n},
\end{align}
where $\bm{\rho}$ is a three-dimensional Gaussian random vector with mean zero and covariance matrix $\sigma_{\bm{\theta}}^2 \delta\mathbb{I}_3$.

In all simulations, we take $\sigma_v \in [1,2]$ and $\sigma_{\bm\theta} \in [0.05,0.15]$ to introduce small random perturbations in the target’s motion. For each tracking experiment, a target trajectory is simulated for up to $80$ discrete time steps with $\delta = 0.1$.

 \subsection{Experimental setup} \label{subsec:num:setup}

We describe the experimental setup for the numerical simulations in accordance with the comprehensive tracking framework in Section~\ref{subsec:framework:compre}. The setup comprises a precomputed dataset for shape identification, construction of the far-field data dictionary, the measurement configuration, and the numerical computation of the far-field data.

For target shape identification in Step 0 of the framework, we use a precomputed dataset of $20{,}000$ admissible scatterers. Each sample consists of a target shape and its corresponding far-field data for the initial configuration, represented in the format of \eqref{def:framework:ff}. If the shape is known {\it a priori}, Step 0 is omitted. 

To construct the precomputed dataset, we generate scatterers as extended targets at wavenumber $k=1$ according to the target model described in Section \ref{subsec:num:modeling:shape}. Specifically, the shapes are represented by perturbed ellipsoids defined by \eqnref{eqn:num:bd} and \eqref{eqn:num:bd:f}. We fix the perturbation order at $N=2$ and the perturbation parameter at $\varepsilon=0.01$. The semi-axis parameters $a,b,c \in [4,8]$ are drawn under the constraint $a>b>c$, and the perturbation coefficients $f_n^m$ are sampled to satisfy \eqref{ineq:shape:cond}. In addition, we impose the bounds $|f_0^1|\leq 2$ and $|f_1^m|\leq 0.2$ to avoid excessively oscillatory or distorted boundaries that cannot be resolved reliably by the numerical surface mesh. This parameter choice yields a diverse collection of admissible three-dimensional target shapes.

For the numerical simulations, we consider two types of test targets. The first consists of shapes generated under the same admissibility conditions as those used for the precomputed dataset, while ensuring that they are not contained in the dataset. The second consists of shapes generated outside these admissibility conditions (specifically, with $N=3$). The target motions are generated according to the stochastic update rules \eqref{traj:dynamics:v} and \eqref{traj:theta} described in Section~\ref{subsec:num:modeling:motion}.

To construct the far-field data dictionary in Step 1, we discretize the orientation search space as $\bm{\Theta} = [-60^\circ, 60^\circ]^3$ with a resolution of $1^\circ$ in each angular coordinate $\alpha$, $\beta$, and $\gamma$ (see Section \ref{subsec:framework:method}). 

In both the shape-identification and tracking stages, we use an incident field with the fixed incident direction of $d=(1,0,0)$. During the tracking stage, measurements of the far-field pattern $u_n^\infty$ are acquired at the discrete times $t=n\delta$, $n\geq 0$, with $\delta=0.1$. The far-field sampling configurations are specified in \eqref{def:framework:ff} and \eqref{def:framework:view}. Although the sampling configuration may vary across experiments, the same configuration is used for both shape identification and tracking within each experiment. The full-aperture setting employs $18$ observation directions $\hat{x}$, whereas the upper two-thirds and upper one-third aperture configurations use $12$ and $6$ directions, respectively; see Table~\ref{tab:result:meas} for details.

To model measurement noise, we add complex Gaussian noise with a prescribed signal-to-noise ratio (SNR), scaled based on the mean magnitude of the far-field data.

The far-field data are computed numerically by evaluating \eqref{eqn:ff:orig}, after the integral equation \eqref{eqn:density} is solved with the coupling parameter $\eta = k=1$, to ensure numerical stability \cite{Kress:1985:MCN}. 
All simulations are performed using the boundary element method as implemented in GYPSILAB for MATLAB \cite{Alouges:2018:FBS}. Compared with the two-dimensional case, three-dimensional simulations require sufficiently fine boundary meshes to obtain accurate far-field evaluations.

 \subsection{Reconstruction examples} \label{subsec:num:ex}

Numerical simulations are conducted using the setup described in the preceding subsection.
In all simulations, Bayesian optimization is performed with an angular search radius of $M_{\bm{\theta}}=10^\circ$. Each tracking step requires approximately $30$–$50$ seconds. The higher computational cost relative to the two-dimensional setting is mainly attributable to the use of ten objective function evaluations at each Bayesian optimization step, compared with fewer evaluations in the two-dimensional case.

Figure \ref{fig:ex:full:shape} presents numerical examples of shape identification from full-aperture far-field data at an SNR of $15$dB. The first row compares the true and reconstructed shapes for a target satisfying the geometric constraints described in Section \ref{subsec:num:setup}, whereas the second row shows the corresponding results for a target that does not satisfy these constraints.
Figure \ref{fig:ex:full:traj} illustrates location tracking results obtained from a time-series of far-field data under the upper one-third aperture configuration at an SNR of $5$dB. The target shape is assumed to be known \textit{a priori}. 

\begin{figure}[h!]
    \centering
\includegraphics[width=0.8\textwidth]{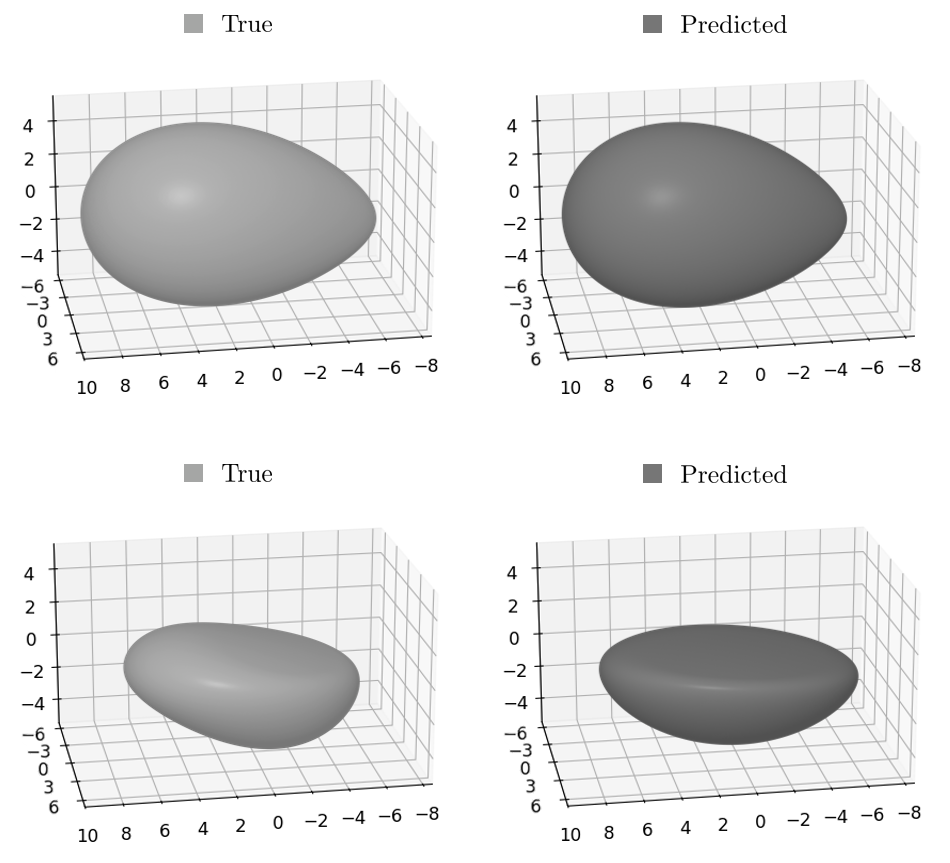} 
\caption{\label{fig:ex:full:shape} Shape identification results obtained from full-aperture far-field data, with $l=0,\ldots,5$ and $m=-1,0,1$, as specified in \eqref{def:framework:view}, at an SNR of $15$dB. The left column shows the true target boundaries, whereas the right column shows the corresponding reconstructed (predicted) boundaries. The target samples in the first and second rows satisfy and do not satisfy, respectively, the geometric constraints described in Section \ref{subsec:num:setup}.}
\end{figure}

\begin{figure}[h!]
\centering
\includegraphics[width=0.95\textwidth]{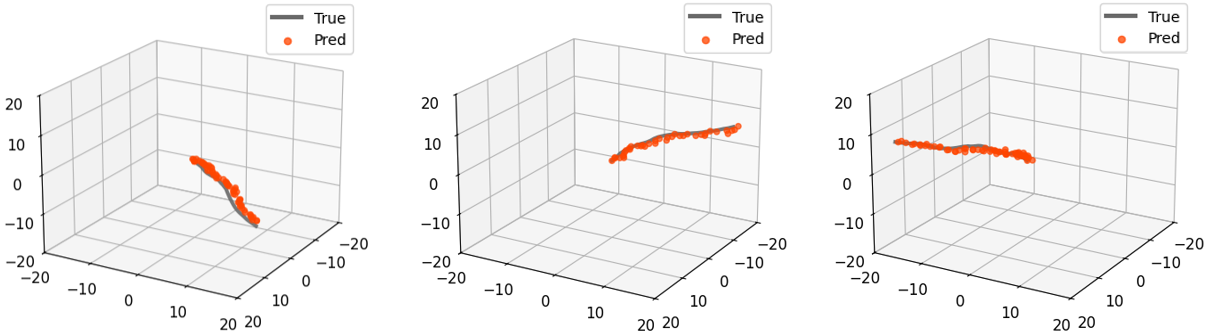} 
\caption{\label{fig:ex:full:traj} Trajectory tracking results obtained from far-field data under the upper one-third aperture, with $l=0,\ldots,5$ and $m=1$, as specified in \eqnref{def:framework:view}, at an SNR of $5$dB. The target shape in this example is assumed to be known {\it a priori}. The gray curve indicates the true trajectory, while the orange markers indicate the estimated locations at the discrete time steps.}
\end{figure}

In what follows, we examine the simultaneous tracking of location and orientation under various measurement configurations. Details of the simulation cases, the corresponding shapes, and the measurement configurations are summarized in Table \ref{tab:result:summary}, Figure \ref{fig:result:shapes}, and Table \ref{tab:result:meas}, respectively.

\begin{table}[h!]
\centering
\caption{\label{tab:result:summary} Summary of the simulation cases and their corresponding result figures. The test targets, Shapes A and B, are shown in Figure~\ref{fig:result:shapes}, and the measurement configurations are listed in Table~\ref{tab:result:meas}.
}
\begin{tabular}{c c c c}
\hline
Prior shape knowledge & Target shape & Measurement type & Result \\
\hline \hline
\multirow{3}{*}{Unknown} & \multirow{3}{*}{Shape A } & Full aperture  & Figure \ref{fig:result:full:unknown} \\
 &  &  Upper two-thirds aperture  & Figure \ref{fig:result:two-thirds:unknown} \\
 &  &  Upper one-third aperture  & Figure \ref{fig:result:one-third:unknown} \\
\hline
Known &  Shape B   & Upper one-third aperture & Figure \ref{fig:result:one-third:known} \\
\hline
\end{tabular}
\end{table} 

\begin{figure}[h!]
    \vskip 1em 
    \centering
    \subfloat[\centering Shape A]{\includegraphics[width=.4\linewidth]{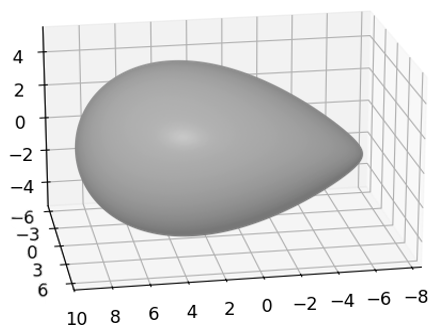}}
    \qquad
    \subfloat[\centering Shape B]{\includegraphics[width=.4\linewidth]{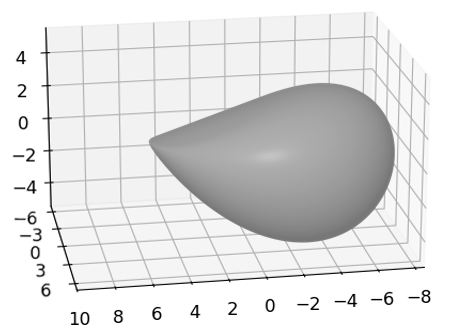}}
    \caption{\label{fig:result:shapes}
    Test targets used in the numerical examples. They are denoted by Shapes~A and~B in Table~\ref{tab:result:summary}. Shape~A satisfies the geometric constraints described in Section~\ref{subsec:num:modeling:shape}, whereas Shape~B does not.} 
\end{figure}

\begin{table}[h!]
\centering
\caption{\label{tab:result:meas} Measurement configurations used in Table \ref{tab:result:summary}. The observation directions are given by $\hat{x}_{l,m}=(\cos\phi_l\cos\psi_m,\sin\phi_l\cos\psi_m,\sin\psi_m)$ with $\phi_l = \frac{l\pi }{3}$ and $\psi_m = \frac{m\pi}{3}$ as defined in \eqref{def:framework:view}.} 
\begin{tabular}{c c}
\hline 
Measurement aperture & Index set \\
\hline\hline
Full  & $l=0,\ldots,5$,\quad $m=-1,0,1$ \\
Upper two-thirds & $l=0,\ldots,5$, \quad $m=0,1$ \\
Upper one-third & $l=0,\ldots,5$, \quad $m=1$  \\
\hline
\end{tabular}
\end{table} 

  \subsubsection{Tracking in full aperture with an unknown shape}

    Figure \ref{fig:result:full:unknown} presents tracking results for an unknown target shape under $15$dB and $10$dB noise levels, corresponding to low- and high-noise regimes, in the full-aperture configuration ($l=0,\ldots,5$, $m=-1,0,1$).
    In both cases, the reconstructed trajectories remain close to the ground truth, with the higher noise level producing noticeably larger deviations.

    \begin{figure}[h!]
    \centering
    \begin{subfloat}[Location tracking under the full-aperture configuration]
	{\includegraphics[width=0.95\textwidth]{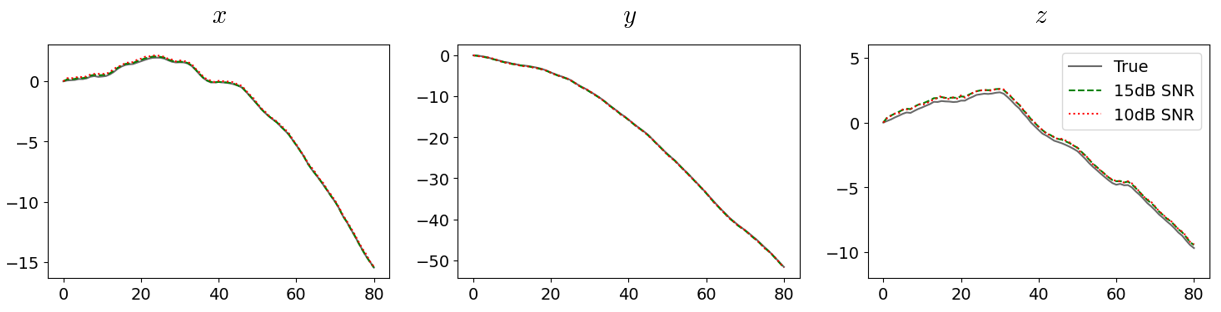}}
    \end{subfloat} 
    \vfill
    \begin{subfloat}[Orientation tracking under the full-aperture configuration]
        {\includegraphics[width=0.95\textwidth]{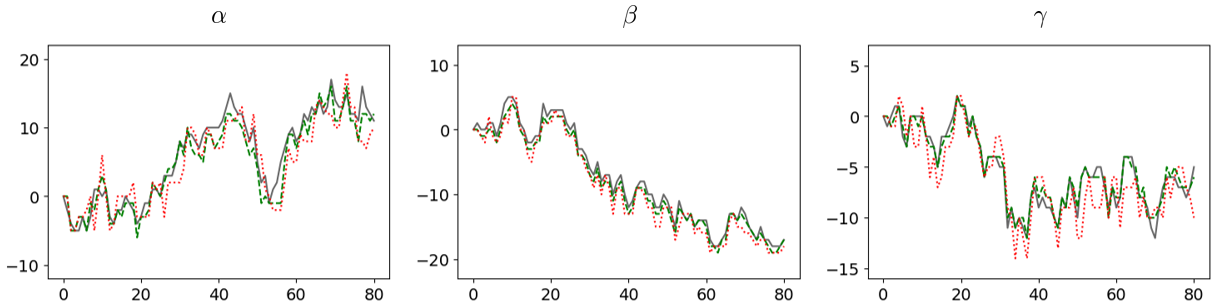}}
    \end{subfloat}
    \caption{\label{fig:result:full:unknown} Tracking results for Shape~A without {\it a priori} information about the target shape, obtained under the full-aperture configuration at $15$dB and $10$dB noise levels. 
    (a) shows the reconstructed target location, with the $x$-, $y$-, and $z$-coordinates displayed from left to right. (b) shows the estimated orientation, with the roll, pitch, and yaw angles $(\alpha,\beta,\gamma)$ displayed from left to right.} 
    \end{figure} 

  \subsubsection{Tracking in limited aperture with an unknown shape}

    We next examine tracking for an unknown target shape using limited-aperture measurements, restricted to the upper two-thirds and upper one-third of the full aperture. Figures \ref{fig:result:two-thirds:unknown} and \ref{fig:result:one-third:unknown} show the corresponding tracking results with different noise levels.
    Compared with the full-aperture case, limiting the measurement aperture leads to reduced robustness in both shape recovery and subsequent tracking. In particular, fewer measurements amplify the sensitivity of the estimated orientation, while higher noise increases the deviation of the recovered trajectories.

\begin{figure}[h!]
    \centering
    \begin{subfloat}[Location tracking under the upper two-thirds aperture configuration]
    {\includegraphics[width=0.95\textwidth]{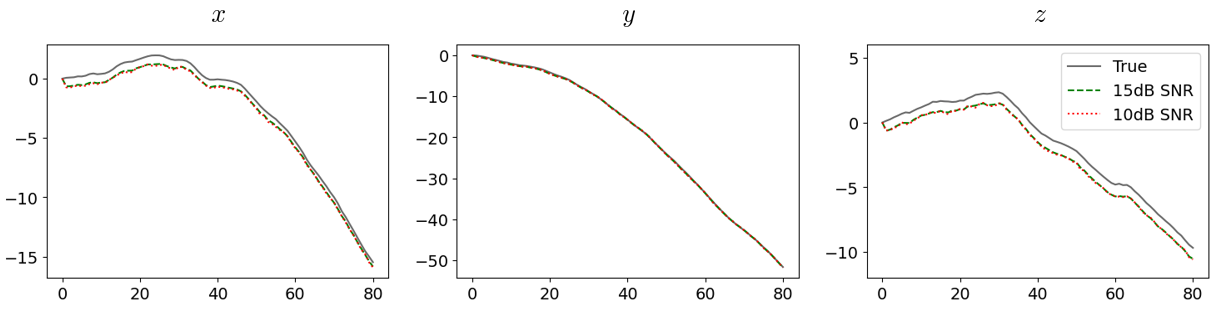}}
    \end{subfloat} 
    \vfill
    \begin{subfloat}[Orientation tracking under the upper two-thirds aperture configuration]
    {\includegraphics[width=0.95\textwidth]{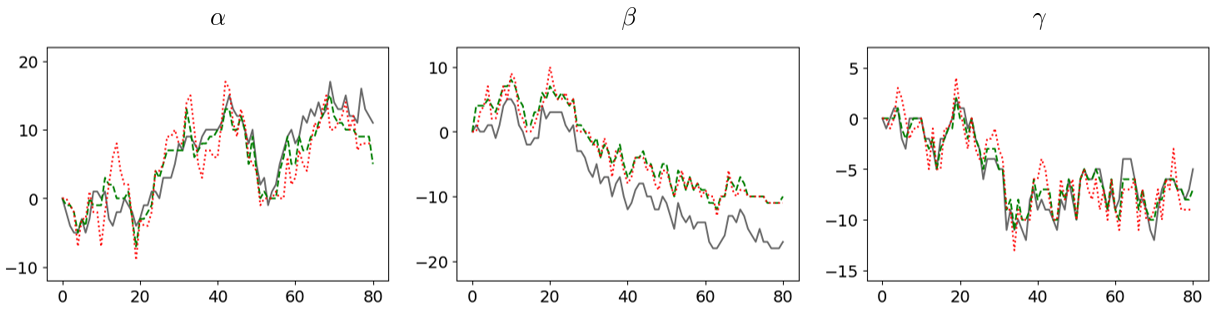}}
    \end{subfloat}
\caption{\label{fig:result:two-thirds:unknown} Tracking results for Shape~A without {\it a priori} information about the target shape, obtained under the upper two-thirds aperture configuration at $15$~dB and $10$~dB noise levels. 
(a) shows the reconstructed target location, with the $x$-, $y$-, and $z$-coordinates displayed from left to right. (b) shows the estimated orientation, with the roll, pitch, and yaw angles $(\alpha,\beta,\gamma)$ displayed from left to right.} 
\end{figure} 

\begin{figure}[h!]
    \centering
    \begin{subfloat}[Location tracking under the upper one-third aperture configuration]
    {\includegraphics[width=0.95\textwidth]{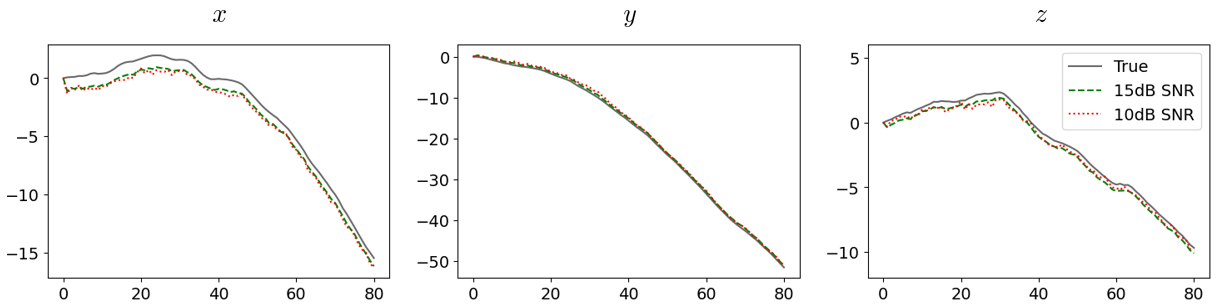}}
    \end{subfloat} 
    \vfill
    \begin{subfloat}[Orientation tracking under the upper one-third aperture configuration]
    {\includegraphics[width=0.95\textwidth]{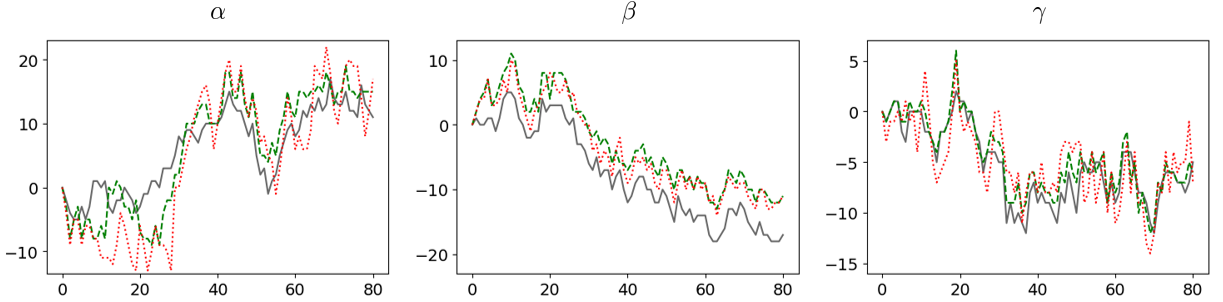}}
    \end{subfloat}
\caption{\label{fig:result:one-third:unknown} Tracking results for Shape~A without {\it a priori} information about the target shape, obtained under the upper one-third aperture configuration at $15$~dB and $10$~dB noise levels. 
(a) shows the reconstructed target location, with the $x$-, $y$-, and $z$-coordinates displayed from left to right. (b) shows the estimated orientation, with the roll, pitch, and yaw angles $(\alpha,\beta,\gamma)$ displayed from left to right.} 
\end{figure}
    
  \subsubsection{Tracking in limited aperture with a known shape}	

    Finally, assuming that the target shape is known \textit{a priori}, we examine tracking with limited-aperture measurements, restricted to the upper one-third of the full aperture. The corresponding results in Figure \ref{fig:result:one-third:known} indicate that accurate tracking of both location and orientation is still achievable, even for geometrically more challenging targets.
    As in the preceding cases, higher noise level leads to larger deviations from the true location and orientation.

\begin{figure}[h!]
        \centering
        \begin{subfloat}[Location tracking under the upper one-third aperture configuration]
    	{\includegraphics[width=0.95\textwidth]{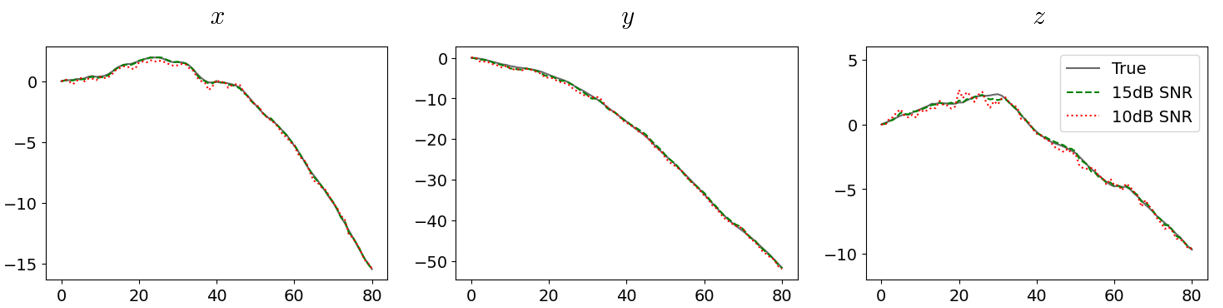}}
        \end{subfloat} 
        \vfill
        \begin{subfloat}[Orientation tracking under the upper one-third aperture configuration]
            {\includegraphics[width=0.95\textwidth]{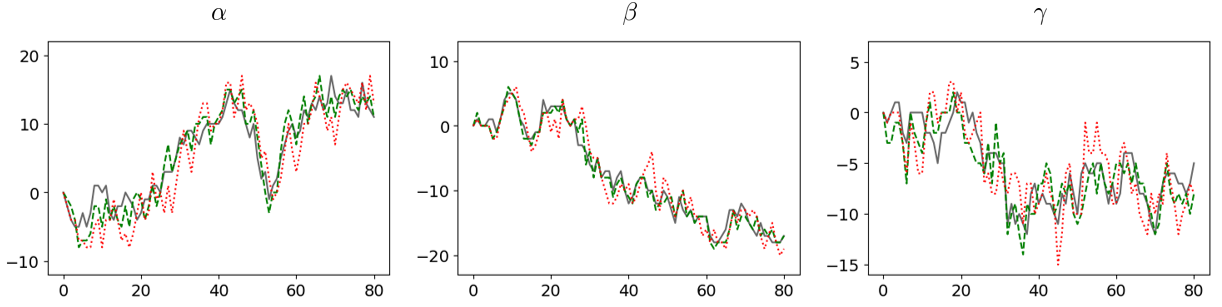}}
        \end{subfloat}
\caption{\label{fig:result:one-third:known} Tracking results for Shape~B with the target shape known {\it a priori}, obtained under the upper one-third aperture configuration at $15$~dB and $10$~dB noise levels. 
(a) shows the reconstructed target location, with the $x$-, $y$-, and $z$-coordinates displayed from left to right. (b) shows the estimated orientation, with the roll, pitch, and yaw angles $(\alpha,\beta,\gamma)$ displayed from left to right.}
\end{figure}

\section{Conclusion} \label{sec:fin}
We have presented a three-dimensional framework for tracking a moving scatterer from far-field data via Bayesian optimization, extending our earlier two-dimensional results. 
To improve the efficiency of the Bayesian optimization in three dimensions, we incorporate analytical characterizations of the far-field pattern under translations and rotations. We also establish posterior consistency for the probabilistic model of the objective function used in the optimization procedure.
Numerical experiments under various measurement apertures and noise levels demonstrate the effectiveness of the method for identifying initially unknown target shapes and subsequently tracking their motion in three dimensions. When the target shape is known {\it a priori} or can be reliably identified, both its location and orientation can be recovered accurately even from restricted and noisy measurements. 
Future work will extend the proposed framework to the tracking of multiple scatterers, building on related developments in this direction \cite{Ohe:2020:RRM,Ohe:2011:RRT,Son:2024:RTT}.

\section*{Acknowledgments}
This work is supported by the National Research Foundation of Korea(NRF) grant funded by the Korea government(MSIT) (RS-2023-00242528, RS-2024-00359109, RS-2025-02303239).

\begin{appendices}

\section{Ellipsoidal harmonics}\label{appen:ellipsoidal}
In this section, we review ellipsoidal harmonics; the reader is referred to \cite{Dassios:2012:EHT} for a thorough discussion. Consider the reference ellipsoid defined by
\[
    E_\text{ref}:\frac{x_1^2}{a^2} + \frac{x_2^2}{b^2} + \frac{x_3^2}{c^2} =1
\]
for $a>b>c>0$. If $\lambda_1$, $\lambda_2$, and $\lambda_3$ are three real roots of the equation
\[
    \frac{x_1^2}{a^2-\lambda} + \frac{x_2^2}{b^2-\lambda} + \frac{x_3^2}{c^2-\lambda} =1,
\]
which are ordered as $-\infty<\lambda_3<c^2< \lambda_2<b^2< \lambda_1<a^2$, we define the ellipsoidal coordinates $(\rho,\mu, \nu)$  by
\[
    \rho =\sqrt{a^2 - \lambda_3}, \quad \mu =\sqrt{a^2 - \lambda_2}, \quad \nu =\sqrt{a^2 - \lambda_1}.
\]
Let $h_1=\sqrt{b^2-c^2}$, $h_2=\sqrt{a^2-c^2}$, and $h_3 = \sqrt{a^2-b^2}$ so that we have $h_1<h_2$ and
\[
    0 \leq \nu^2 \leq h_3^2 \leq \mu^2 \leq h_2^2 \leq \rho^2 <\infty.
\]
Then the family of the associated ellipsoids is given by
\begin{align*} 
    \frac{x_1^2}{\rho^2} + \frac{x_2^2}{\rho^2-h_3^2} + \frac{x_3^2}{\rho^2-h_2^2} =1, \quad \rho \in (h_2,\infty).
\end{align*}
In particular, a point $(x_1,x_2,x_3)$ on the ellipsoid $\rho=\rho_0$ in the first octant is written as
\begin{equation*} 
    x_1 = \frac{\rho_0\mu\nu}{h_2h_3}, \quad
    x_2 = \frac{\sqrt{\rho_0^2-h_3^2}\sqrt{\mu^2-h_3^2}\sqrt{h_3^2-\nu^2}}{h_1h_3},\quad 
    x_3 = \frac{\sqrt{\rho_0^2-h_2^2}\sqrt{h_2^2-\mu^2}\sqrt{h_2^2-\nu^2}}{h_1h_2}.
\end{equation*}

The Lam\'e functions ${E}_n^m$, $m=1,\dots,2n+1$, of degree $n$ are solutions of the Lam\'e equation in ellipsoidal coordinates. For example, $E_0^0(x)=1$, $E_1^1(x) = x$, $E_1^2(x)=\sqrt{|x^2-h_3^2|}$, and $E_1^3(x)=\sqrt{|x^2-h_2^2|}$. The products of two such Lam\'e functions,
\[
    S_n^m(\mu,\nu) = E_n^m(\mu) E_n^m(\nu),
\]
are called the surface ellipsoidal harmonics. These are orthogonal over the surface of any confocal ellipsoid with respect to the weighting function
\[l_\rho(\mu,\nu):= \frac{1}{\sqrt{{\rho}^2-\mu^2}\sqrt{{\rho}^2-\nu^2}}.\]
For instance, at $\rho = a$, which corresponds to the reference ellipse $E_\text{ref}$, we have
\[
    \iint_{E_\text{ref}} S_n^m S_{n'}^{m'} d\Omega = \gamma_n^m \delta_{nn'}\delta_{mm'},
\]
where $d\Omega = h_\mu h_{\nu} l_\rho(\mu,\nu) d\mu d\nu$ and $\gamma_n^m$ denotes the normalization constant. Thus, for $f \in L^1$, we have the expansion
\[
    f(\mu,\nu) = \sum_{n=0}^{N-1} \sum_{m=1}^{2n+1} f_n^m S_n^m(\mu,\nu)
\]
for some $N \in \mathbb{N}$. With this expression, the boundary surface given by the form \eqref{eqn:num:bd} can model the shape of a three-dimensional target.

\end{appendices}

\providecommand{\bysame}{\leavevmode\hbox to3em{\hrulefill}\thinspace}
\providecommand{\MR}{\relax\ifhmode\unskip\space\fi MR }
\providecommand{\MRhref}[2]{%
	\href{http://www.ams.org/mathscinet-getitem?mr=#1}{#2}
}
\providecommand{\href}[2]{#2}

\end{document}